\numberwithin{equation}{section}
\theoremstyle{plain}
\newtheorem{thm}{Theorem}[section]
\newtheorem{coro}[thm]{Corollary}
\newtheorem{prop}[thm]{Proposition}
\newtheorem{lem}[thm]{Lemma}
\newtheorem{defi}[thm]{Definition}
\theoremstyle{definition}
\theoremstyle{remark}
\newtheorem{rem}[thm]{Remark}
\newcommand\eps{\varepsilon}
\newcommand\F{{\cal F}}
\newcommand\Fr{{\F}^{\rm{rad}}}
\newcommand\EE{{\cal E}}
\newcommand\PP{{\cal P}}
\newcommand\PPa{{\cal P}_{\rm{ac}}(\Omega)}
\newcommand\tilm{{\widetilde{\mu}}}
\newcommand\tilr{{\widetilde{\rho}}}
\newcommand\tilfi{{\widetilde{\varphi}}}
\newcommand\tilz{{\widetilde{z}}}
\newcommand\tilT{{\widetilde{T}}}
\newcommand\tilb{{\widetilde{\beta}}}
\newcommand{\R}{\mathbb{R}}
\newcommand{\BV}{\mathrm{BV}}
\newcommand{\id}{\mathrm{id}}
\newcommand{\Per}{\mathrm{Per}}
\newcommand{\dive}{\mathrm{div}}
\newcommand{\spt}{\mathrm{spt}}
\newcommand{\argmin}{\mathrm{argmin}}
\newcommand{\Omb}{\overline{\Omega}}
\newcommand{\ovmu}{\overline{\mu}}
\newcommand\pref[1]{(\ref{#1})}
\begin{document}
\title{On the total variation Wasserstein gradient flow and the TV-JKO scheme} 
\author{Guillaume   {\sc Carlier}\thanks{Ceremade, UMR CNRS 7534,  Universit\'e Paris Dauphine, Pl. de Lattre de Tassigny, 75775, Paris Cedex 16, France, and MOKAPLAN, INRIA-Paris, E-mail: carlier@ceremade.dauphine.fr }     \and 
 Clarice  {\sc Poon }\thanks{Centre for Mathematical Sciences, University of Cambridge, Wilberforce Rd, Cambridge CB3 0WA, United Kingdom, 
  Email: C.M.H.S.Poon@maths.cam.ac.uk}  
}

\maketitle

\begin{abstract}
We study the JKO scheme for the total variation, characterize the optimizers, prove some of their qualitative properties (in particular a form of maximum principle and in some cases, a minimum principle as well).  Finally,  we establish a convergence result as the time step goes to zero to a solution of a fourth-order nonlinear evolution equation, under the additional assumption that the density remains bounded away from zero. This lower bound is shown in dimension one and in the radially symmetric case. 

\end{abstract}

\textbf{Keywords:}  total variation, Wasserstein gradient flows, JKO scheme, fourth-order evolution equations.

\medskip

\textbf{MS Classification:}  35G31, 49N15. 

\section{Introduction}\label{sec-intro}

Variational schemes based on total variation are extremely popular in image processing for denoising purposes, in particular the seminal work of Rudin, Osher and Fatemi \cite{Rof} has been extremely influential and is still the object of an intense stream of research, see \cite{chambolle2016geometric} and the references therein. Continuous-time counterparts are well-known to be related to the $L^2$ gradient flow of the total variation, see Bellettini, Caselles and Novaga \cite{bellettini2002total} and the mean-curvature flow, see Evans and Spruck \cite{evansspruck}. The gradient flow of the total variation for other Hilbertian structures may be natural as well and in particular the $H^{-1}$  case, leads to a singular fourth-order evolution equation studied by Giga  and Giga \cite{giga2010}, Giga, Kuroda and  Matsuoka  \cite{giga2014}. In the present work,  we consider another metric, namely the Wasserstein one.

\smallskip

Given an open subset $\Omega$ of $\R^d$ and $\rho \in L^1(\Omega)$, recall that the total variation of $\rho$ is given by
\begin{equation}\label{deftv}
J(\rho):=\sup \Big\{ \int_{\Omega} \dive(z) \rho \; : \; z\in C_c^1(\Omega), \; \Vert z \Vert_{L^\infty} \le 1  \Big\}
\end{equation}
and $\BV(\Omega)$ is by definition the subspace of $L^1(\Omega)$ consisting of those $\rho$'s in $L^1(\Omega)$ such that $J(\rho)$ is finite. The following fourth-order nonlinear evolution equation 
\begin{equation}\label{pde4}
\partial_t \rho+ \dive \Big(\rho \;  \nabla  \dive  \Big( \frac{\nabla \rho}{\vert \nabla \rho \vert} \Big)  \Big)=0, \mbox{ in $(0,T)\times\Omega$}, \; \rho_{\vert_{t=0}}=\rho_0,  
\end{equation}
supplemented by the zero-flux boundary condition
\begin{equation}\label{zerofluxbc}
\; \rho \nabla  \dive   \Big(  \frac{\nabla \rho}{\vert \nabla \rho \vert}   \Big) \cdot \nu=0 \mbox{ on $\partial \Omega$}
\end{equation}
 has been proposed in  \cite{burger2012regularized} for the purpose of denoising image densities. Numerical schemes for approximating the solutions of this equation have been investigated in \cite{burger2012regularized,during2012high,benning2013primal}. One should consider weak solutions and in particular interpret the nonlinear term $\dive(\frac{\nabla \rho}{\vert \nabla \rho\vert})$ as the negative of an element of the subdifferential of $J$ at $\rho$.

 At least formally,  when $\rho_0$ is a probability density on $\Omega$, \pref{pde4}-\pref{zerofluxbc} can be viewed as the Wasserstein gradient flow of $J$ (we refer to the textbooks of Ambrosio, Gigli, Savar\'e \cite{ambrosio2008gradient} and Santambrogio \cite{santambrogio2015optimal}, for a detailed exposition). Following the seminal work of Jordan, Kinderlehrer and Otto \cite{jordan1998variational} for the Fokker-Planck equation, it is reasonable to expect that solutions of (\ref{pde4}) can be obtained, at the limit $\tau\to 0^+$, of the JKO Euler implicit scheme:
\begin{equation}\label{jkotv0}
\rho_0^\tau=\rho_0, \; \rho_{k+1}^\tau \in \argmin  \Big\{ \frac{1}{2\tau} W_2^2(\rho_{k}^\tau, \rho)+ J(\rho), \; \rho \in \BV(\Omega)\cap \PP_2(\Omb)\Big\}
\end{equation}
where $\PP_2(\Omb)$ is the space of Borel probability measures $\Omb$ with finite second moment and $W_2$ is the quadratic Wasserstein distance:
\begin{equation}\label{defw2}
W_2^2(\rho_0, \rho_1):=\inf_{\gamma\in \Pi(\rho_0, \rho_1)}  \Big\{ \int_{\R^d\times \R^d} \vert x-y\vert^2 \mbox{d} \gamma(x,y)    \Big\},
\end{equation}
$\Pi(\rho_0, \rho_1)$ denoting the set of transport plans between $\rho_0$ and $\rho_1$ i.e. the set of probability measures on $\R^d\times \R^d$ having $\rho_0$ and $\rho_1$ as marginals. Our aim is to study in detail the discrete TV-JKO scheme \pref{jkotv0} as well as its connection with (suitable weak solutions) of the PDE \pref{pde4}.  Although the assertion  that \eqref{pde4} is the TV Wasserstein gradient flow is central to  the numerical schemes described in \cite{burger2012regularized,during2012high,benning2013primal}, there has been so far, to the best of our knowledge, no theoretical justification of this fact. 

\smallskip

Fourth-order equations which are Wasserstein gradient flows of functionals involving the gradient of $\rho$, such as the Dirichlet energy or the Fisher information, have been studied by McCann, Matthes and Savar\'e \cite{matthes2009fourth} who found a new method, \emph{the flow interchange technique}, to prove higher-order  estimates, we refer to  \cite{Matthes2016} for a recent reference on this topic. The total variation is however too singular for such arguments to be directly applicable, as far as we know. We shall prove the convergence of JKO steps as $\tau \to 0^+$ under the extra assumption that densities remain bounded aways from zero. Whether this  extra assumption is reasonable or not is related to a minimum principle issue, interesting in its own right, namely the monotonicty of the infimum along JKO steps. We shall see that, in a convex domain, JKO steps obey a maximum principle (the maximum of the density is nonincreasing along JKO steps). The corresponding minimum principle seems more difficult to prove and we have been able to establish it only in some particular cases, namely in dimension one and in the radially symmetric case, eventhough we conjecture it is satisfied in more general situations.

\smallskip

The paper is organized as follows. In section \ref{sec-examples}, we start with the discussion of a few examples. Section \ref{sec-opti} establishes optimality conditions for JKO steps thanks to an  entropic regularization scheme. Section \ref{sec-max} is devoted to some properties of solutions of JKO steps and in particular a maximum principle based on a result of \cite{de2016bv}, we also establish a minimum principle in dimension one and in the radially symmetric case.  Finally,  in section \ref{sec-conv}, we prove  a conditional convergence result, we establish convergence of the JKO scheme, as $\tau\to 0^+$, under the extra assumption that the density remains away from zero, this covers the unidimensional case as well as the  radially symmetric case when the initial conditon is strictly positive.

\section{Some examples}\label{sec-examples}

We first  recall the Kantorovich dual formulation of $W_2^2$:
\begin{equation}\label{dualformW2}
\frac{1}{2}W_2^2(\mu_0, \mu_1)=\sup \Big\{\int_{\R^d} \psi \mbox{d} \mu_0 + \int_{\R^d} \varphi \mbox{d}\mu_1 \; : \; \psi(x)+\varphi(y)\le \frac{\vert x-y\vert^2}{2}  \Big\}
\end{equation}
an optimal pair $(\psi, \varphi)$ for this problem is called a pair of Kantorovich potentials. The existence of Kantorovich potentials is well-known and such potentials can be taken to be conjugates of each other, i.e. such that
\[\varphi(x)=\inf_{y\in \R^d} \{\frac{1}{2}\vert x-y\vert^2-\psi(y)\}, \; \psi(y)=\inf_{x\in \R^d} \{\frac{1}{2}\vert x-y\vert^2-\varphi(x)\},\]
which implies that $\varphi$ and $\psi$ are semi-concave (more precisely $\frac{1}{2} \vert . \vert^2-\varphi$ is convex). If $\mu_1$ is absolutely continuous with respect to the $d$-dimensional Lebesgue measure, $\varphi$ is differentiable $\mu_1$ a.e. and the map $T=\id - \nabla \varphi$ is the gradient of a convex function pushing forward $\mu_1$ to $\mu_0$ which is in fact the optimal transport between $\mu_0$ and $\mu_1$ thanks to Brenier's theorem \cite{brenier91}. In such a case, we will simply refer to $\varphi$ as a Kantorovich potential between $\mu_1$ and $\mu_0$. We refer the reader to \cite{villanitop} and \cite{santambrogio2015optimal} for details.

\smallskip

In this section, we will consider some explicit examples which rely on the following sufficient optimality condition (details for a rigorous derivation of the Euler-Lagrange equation for JKO steps will be given  in section \ref{sec-opti}) in the case of the whole space i.e. $\Omega=\R^d$. Let us also recall that by Sobolev inequality $\BV(\R^d)$ is continuously embedded in $L^{\frac{d}{d-1}}(\R^d)$.

\begin{lem}\label{1dsufficient}
Let $\rho_0 \in \PP_2(\R^d)$, $\tau>0$ and $\Omega=\R^d$ (so $J$ is the total variaton on the whole space), if $\rho_1\in \BV(\R^d)\cap \PP_2(\R^d)$ is such that 
\begin{equation}\label{condsuff}
\frac{\varphi}{\tau} + \dive(z)\ge 0, \mbox{ with equality $\rho_1$-a.e.} 
\end{equation}
where $\varphi$ is a Kantorovich potential between $\rho_1$ and $\rho_0$ and $z\in C^1(\R^d)$, with  $\Vert z \Vert_{L^\infty} \le 1$, $\dive(z)\in L^d(\R^d)$ (so that $\dive(z) \rho_1\in L^1(\R^d)$), and 
\begin{equation}\label{zsubdiff}
J(\rho_1)=\int_{\R^d} \dive(z) \rho_1.
\end{equation}
Then, setting 
\begin{equation}\label{defPhi}
\Phi_{\tau, \rho_0}(\rho):=\frac{1}{2\tau} W_2^2(\rho_0, \rho)+J(\rho), \; \forall \rho \in \BV(\R^d)\cap \PP_2(\R^d)
\end{equation}
one has 
\[\Phi_{\tau, \rho_0}(\rho_1) \le \Phi_{\tau, \rho_0}(\rho), \; \forall \rho \in \BV(\R^d)\cap \PP_2(\R^d).\]
\end{lem}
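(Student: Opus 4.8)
The plan is to show that \eqref{condsuff} and \eqref{zsubdiff} express, respectively, that $\varphi/\tau$ and $\dive(z)$ are subgradients (as linear functionals acting on densities) of $\rho\mapsto \frac{1}{2\tau}W_2^2(\rho_0,\rho)$ and of $J$ at the point $\rho_1$, and that their sum, being nonnegative and vanishing $\rho_1$-a.e., produces the first-order optimality inequality. Concretely, I would establish the two one-sided estimates
\[
\frac{1}{2\tau}\big(W_2^2(\rho_0,\rho)-W_2^2(\rho_0,\rho_1)\big)\ge \frac{1}{\tau}\int_{\R^d}\varphi\,(\rho-\rho_1)
\]
and
\[
J(\rho)-J(\rho_1)\ge \int_{\R^d}\dive(z)\,(\rho-\rho_1),
\]
for every $\rho\in\BV(\R^d)\cap\PP_2(\R^d)$, add them, and then invoke \eqref{condsuff} to conclude.

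For the Wasserstein term I would use the Kantorovich dual \eqref{dualformW2}. Let $(\psi,\varphi)$ be the optimal (conjugate) pair between $\rho_0$ and $\rho_1$, so that $\frac12 W_2^2(\rho_0,\rho_1)=\int \psi\,d\rho_0+\int\varphi\,d\rho_1$. Since the admissibility constraint $\psi(x)+\varphi(y)\le\frac12|x-y|^2$ does not involve the measures, the same pair is feasible for $W_2^2(\rho_0,\rho)$, giving $\frac12 W_2^2(\rho_0,\rho)\ge\int\psi\,d\rho_0+\int\varphi\,d\rho$. Subtracting the equality from this inequality yields the first displayed estimate; here $\int\varphi\,d\rho_1$ is finite, while $\int\varphi\,d\rho$ is well-defined in $[-\infty,+\infty)$ because $\varphi$ is dominated by a quadratic and $\rho$ has finite second moment.

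For the total variation term I would use that any $z$ with $\Vert z\Vert_{L^\infty}\le 1$ gives the lower bound $\int\dive(z)\rho\le J(\rho)$, combined with the saturation \eqref{zsubdiff}. The delicate point — the step I expect to require the most care — is that $z$ is not assumed compactly supported, so $\int\dive(z)\rho\le J(\rho)$ does not follow directly from the definition \eqref{deftv}. I would recover it by truncation: apply \eqref{deftv} to $z_n=\eta(\cdot/n)\,z\in C_c^1(\R^d)$ and pass to the limit, using dominated convergence on $\int \eta(\cdot/n)\dive(z)\rho$ and controlling the cross term $\int z\cdot\nabla(\eta(\cdot/n))\,\rho$ by $\frac{C}{n}\int_{\{n\le|x|\le 2n\}}|\rho|$; the Sobolev embedding $\BV(\R^d)\hookrightarrow L^{d/(d-1)}(\R^d)$ together with Hölder's inequality bounds this last quantity by $C\,\Vert\rho\Vert_{L^{d/(d-1)}(\{n\le|x|\le 2n\})}$, which tends to $0$.

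Adding the two estimates gives
\[
\Phi_{\tau,\rho_0}(\rho)-\Phi_{\tau,\rho_0}(\rho_1)\ge \int_{\R^d}\Big(\frac{\varphi}{\tau}+\dive(z)\Big)\rho-\int_{\R^d}\Big(\frac{\varphi}{\tau}+\dive(z)\Big)\rho_1.
\]
By \eqref{condsuff} the integrand $\frac{\varphi}{\tau}+\dive(z)$ is nonnegative and vanishes $\rho_1$-a.e., so the second integral is $0$ and the first is nonnegative; this yields $\Phi_{\tau,\rho_0}(\rho)\ge\Phi_{\tau,\rho_0}(\rho_1)$. The nonnegativity of $\frac{\varphi}{\tau}+\dive(z)$ also legitimizes the splitting of the integrals: since $\dive(z)\rho\in L^1(\R^d)$ and $\big(\frac{\varphi}{\tau}+\dive(z)\big)\rho\ge0$, the negative part of $\frac{\varphi}{\tau}\rho$ is integrable, so $\int\varphi\,d\rho$ is in fact bounded below. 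Finally, the additive-constant gauge freedom of $\varphi$ plays the role of the Lagrange multiplier associated with the mass constraint $\int\rho=1$, which is why no such multiplier appears explicitly in \eqref{condsuff}.
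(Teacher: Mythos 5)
Your proof is correct and follows essentially the same route as the paper's: both establish the subgradient inequality $J(\rho)-J(\rho_1)\ge\int\dive(z)(\rho-\rho_1)$ from \eqref{zsubdiff} and the inequality $\frac{1}{2\tau}W_2^2(\rho_0,\rho)\ge\frac{1}{2\tau}W_2^2(\rho_0,\rho_1)+\int\frac{\varphi}{\tau}(\rho-\rho_1)$ from Kantorovich duality, then add them and invoke the sign condition \eqref{condsuff}. The only difference is that you spell out two steps the paper leaves implicit --- the truncation argument justifying $\int\dive(z)\rho\le J(\rho)$ for non-compactly-supported $z$ (via the Sobolev embedding $\BV(\R^d)\hookrightarrow L^{d/(d-1)}$, which is exactly why the paper assumes $\dive(z)\in L^d$) and the integrability of $\int\varphi\,\mathrm{d}\rho$ --- which is a welcome tightening, not a deviation.
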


\begin{proof}
For all $\rho\in \BV(\R^d)\cap \PP_2(\R^d)$, $J(\rho)\ge \int_{\R^d} \dive(z) \rho=J(\rho_1)+\int_{\R^d} \dive(z) (\rho-\rho_1)$, and it follows from  the Kantorovich duality formula that
\[\frac{1}{2\tau} W_2^2(\rho_0, \rho)\ge \frac{1}{2\tau} W_2^2(\rho_0, \rho_1) +\int_{\R^d} \frac{\varphi}{\tau} (\rho-\rho_1).\]
The claim then directly follows from \pref{condsuff}. 
\end{proof}


\subsection{The case of a characteristic function}\label{subsec-charac}

A simple  illustration of  Lemma \ref{1dsufficient} in dimension 1 concerns the case of a uniform $\rho_0$, (here and in the sequel we shall denote by $\chi_A$ the characteristic function of the set $A$):
\[\rho_{0}=\rho_{\alpha_0}, \; \alpha_0 >0, \; \rho_{\alpha}:=\frac{1}{2\alpha} \chi_{[-\alpha, \alpha]}.\]
It is natural to make the ansatz that the minimizer of $\Phi_{\tau, \rho_0}$ defined by \pref{defPhi} remains of the form $\rho_1=\rho_{\alpha_1}$ for some $\alpha_1>\alpha_0$. The optimal transport between $\rho_{\alpha_1}$ and $\rho_0$ being the linear map $T=\frac{\alpha_0}{\alpha_1} \id$, a direct computation gives
\[\Phi_{\tau, \rho_0}(\rho_{\alpha_1})=\frac{1}{\alpha_1} +\frac{1}{6\tau}(\alpha_1-\alpha_0)^2\]
which is minimal when $\alpha_1$ is the only root in $(\alpha_0, +\infty)$ of 
\begin{equation}\label{optial}
\alpha_1^2(\alpha_1-\alpha_0)=3\tau.
\end{equation}
To check that this is the  correct guess, we shall check that the conditions of Lemma \ref{1dsufficient} are met. It is easy to check that the potential defined by
\[\varphi(x)=\frac{1}{2\alpha_1} (\alpha_1-\alpha_0) x^2-\frac{3\tau }{2 \alpha_1}\]
is a Kantorovich potential between $\rho_1=\rho_{\alpha_1}$ and $\rho_0$.  Define\footnote{The guess for this construction is by integrating the Euler-Lagrange equation on the support of $\rho_{\alpha_1}$.} then $z_1$ by 
\[\tau z_1(x): =-\frac{(\alpha_1-\alpha_0)}{6\alpha_1} x^3+\frac{3 \tau x} {2 \alpha_1} , \; x\in [-\alpha_1, \alpha_1]\]
extended by $1$ on $[\alpha_1, +\infty)$ and $-1$ on $(-\infty, -\alpha_1]$.  By construction $-1\le z_1\le 1$ (use the fact that it is odd and nondecreasing on $[0, \alpha_1]$ thanks to \pref{optial}), also $z_1'(\pm \alpha_1)=0$ so that $z_1 \in C^1(\R)$ and $z_1(\alpha_1)=1$, $z_1(-\alpha_1)=-1$ and one easily checks that $J(\rho_1)=-\int_{\R} z_1  D \rho_1=\int_{\R} z'_1 \rho_1$ (here and in the sequel $D\rho_1$ denotes the Radon measure which is the  distributional derivative of the $\BV$ function $\rho_1$). Moreover $\tau z_1'+\varphi\ge 0$ with an equality on $[-\alpha_1, \alpha_1]$. The optimality of $\rho_1=\rho_{\alpha_1}$ then directly follows from  Lemma \ref{1dsufficient}.

\smallskip

Of course, the argument can be iterated so as  to obtain the full TV-JKO sequence:
\[\rho_{k+1}^\tau=\argmin \;  \Phi_{\tau, \rho_k^\tau} =\Big(\frac{\alpha_{k+1}^\tau}{ \alpha_k^\tau } \id\Big)_\# \rho_k^\tau= \Big(\frac{\alpha_{k+1}^\tau}{ \alpha_0} \id\Big)_\# \rho_0\]
where $\alpha_k^\tau$ is defined inductively by
\[(\alpha_{k+1}^\tau-\alpha_k^\tau)(\alpha_{k+1}^\tau)^2=3 \tau, \; \alpha_0^\tau=\alpha_0\]
which is nothing but the implicit Euler discretization of the ODE
\[\alpha' \alpha^2=3, \; \alpha(0)=\alpha_0,\]
whose solution is $\alpha(t)=(\alpha_0^3+ 9t)^{\frac{1}{3}}$. Extending $\rho_{k}^\tau$ in a piecewise constant way: $\rho^\tau(t)=\rho_{k+1}^\tau$ for $t \in (k\tau, (k+1)\tau]$, it is not difficult to check that $\rho^\tau$ converges (in $L^{\infty}((0,T), (\PP_2(\R), W_2))$ and in $L^p((0,T)\times \R)$ for any $p\in (1, \infty)$ and any $T>0$) to $\rho$ given by $\rho(t,.)=(\frac{\alpha(t)}{\alpha_0} \rm{id})_\#\rho_0$. Since $v(t,x)=\frac{\alpha'(t)}{\alpha(t)} x$ is the velocity field associated to $X(t,x)=\frac{\alpha(t)}{\alpha_0}x$, $\rho$ solves the continuity equation
\[\partial_t \rho +(\rho v)_x=0.\]
In addition, $\rho v=-\rho z_{xx}$ where
\[ z(t,x)=\frac{-\alpha'(t)}{6 \alpha(t)} x^3+ \frac{ 3 x}{2\alpha(t)}, \; x\in[-\alpha(t), \alpha(t)],\]
extended by $1$ (respectively $-1$) on $[\alpha(t), +\infty)$ (respectively $(-\infty, -\alpha(t)]$). The function $z$  is $C^1$, $\Vert z \Vert_{L^\infty} \le 1$ and $z \cdot D \rho =-\vert D \rho\vert$ (in the sense of measures). In other words the limit $\rho$ of $\rho^\tau$ satisfies
\[\partial_t \rho-(\rho z_{xx})_x=0\]
with $\vert z \vert \le 1$ and $z \cdot D\rho =-\vert  D\rho\vert$ which is the natural weak form of \pref{pde4} since $z_{xx}=\nabla \dive(z)$  in dimension one.  


\subsection{Instantaneaous creation of discontinuities}\label{subsec-discont}

We now consider the case where $\rho_0(x)=(1-\vert x\vert)_+$ and will show that the JKO scheme instantaneously creates a discontinuity at the level of $\rho_1$, the minimizer of $\Phi_{\tau, \rho_0}$  when $\tau$ is small enough. We indeed look for $\rho_1$ in the form:
\[\rho_1(x) = \begin{cases}
1-\beta/2  &\mbox{ if $\vert x\vert <   \beta$}, \\
(1-\vert x \vert)_+  &\mbox{ if $\vert x\vert \ge   \beta$,}  \end{cases}\]
for some well-chosen $\beta\in (0,1)$. The optimal transport map $T$ between such a $\rho_1$ and $\rho_0$ is odd and given explicitly by

\[T(x)=\begin{cases}
1-\sqrt{1-x(2-\beta)}  &\mbox{ if $x \in [0,\beta)$},\\
x  & \mbox{ if $x\ge \beta$}. \end{cases}\]

The Kantorovich potential which vanishes at $\beta$ (extended in an even way to $\R_-$) is then given by
\[\varphi(x) = \begin{cases}
\frac{x^2}{2}- x -\frac{(1-x(2-\beta))^{3/2}}{3(1-\beta/2)} +C &\mbox{ if $x \in [0,\beta)$},\\
0 & \mbox{ if $x>\beta$},
\end{cases}\]
where
\[C = -\frac{\beta^2}{2} + \beta +\frac{2(1-\beta)^{3}}{3(2-\beta)}. \]
Let  us now integrate $\tau z'=-\varphi$ on $[0, \beta]$ with initial condition $z(0)=0$, i.e. for $x\in [0, \beta]$
\[\begin{split}
\tau z(x)=&-\frac{x^3}{6} +\frac{x^2}{2}-\frac{4}{15(2-\beta)^2} [1-(1-2\beta)x]^{\frac{5}{2}}\\
&+\Big(\frac{\beta^2}{2} - \beta -\frac{2(1-\beta)^{3}}{3(2-\beta)}\Big) x + \frac{4}{15(2-\beta)^2}
\end{split} \]
Note that $z$ is nondecreasing on $[0, \beta]$  (because  $\varphi(0)<0$, $\varphi(\beta)=0$ and $\varphi$ is convex on $[0, \beta]$ so that $\varphi\le 0$ on $[0, \beta]$), our aim now is to find $\beta\in (0,1)$ in such a way that $z(\beta)=1$ i.e. replacing in the previous formula 
\[\tau=\frac{\beta^3}{3} -\frac{\beta^2}{2}+ \frac{4(1-(1-\beta)^5)}{15(2-\beta)^2}-\frac{2(1-\beta)^3 \beta}{3(2-\beta)}\]
the right hand-side is a continuous function of $\beta\in [0,1]$ taking value $0$ for $\beta=0$ and $\frac{1}{10}$ for $\beta=1$, hence as soon as $10\tau <1$ one may find a $\beta\in (0,1)$ such that indeed $z(\beta)=1$. Extend then $z$ by $1$ on $[\beta, +\infty)$ and to $\R_-$ in an odd way. We then have built a function $z$ which is $C^1$ ($\varphi(\beta)=0$), such that $\vert z \vert \le 1$, $z\cdot D \rho_1=- \vert  D\rho_1 \vert$ and such that $z'+\frac{\varphi}{\tau}=0$. Thanks to Lemma \ref{1dsufficient}, we conclude that $\rho_1$ is optimal. This example shows that discontinuities may appear at the very first iteration of the TV-JKO scheme.

\begin{figure}\label{hatrho}
\begin{center}
\begin{tikzpicture}[scale=3]

\draw (1,0) -- (0,1) -- (-1,0);
\draw (-.2,.8)--(-.2,.9)--(.2,.9)--(.2,0.8);
\draw[dashed] (-.2,.8) -- (-.2,0);
\draw[dashed] (.2,.8) -- (.2,0);
\draw (1.5,0) -- (1,0)node[below] {1} --(.2,0) node[below] {$\beta$}  -- (0,0) node[below] {0} --(-.2,0) node[below] {-$\beta$} -- (-1,0) node[below] {-1}--(-1.5,0);

\draw (0,0) -- (0,1)node[left]{1}-- (0,1.3);

\end{tikzpicture}
\end{center}
\caption{The probablity density functions $\rho_0$ and $\rho_1$ from section \ref{subsec-discont}}
\end{figure}

\section{Euler-Lagrange equation for JKO steps }\label{sec-opti}

 The aim of this section is to establish optimality conditions for \pref{onestepjko}. Despite the fact that it is a convex minimization problem, it involves two nonsmooth terms $J$ and $W^2_2(\rho_0,.)$, so some care should be taken of to justify rigorously the arguments. In the next subsection, we introduce an entropic regularization, the advantage of this strategy is that the minimizer will be positive everywhere, giving some differentiability of the transport term.  
 
 \subsection{Entropic approximation}\label{subsec-entreg}
 
 In this whole section, we assume that $\Omega$ is an open bounded connected (not necessarily convex) subset of $\R^d$ with  Lipschitz boundary and denote by $\PPa$ the set of Borel probability measures on $\Omega$ that are absolutely continuous with respect to the Lebesgue measure (and will use the same notation for $\mu\in \PPa$ both for the measure $\mu$ and its density). Given $\rho_0 \in \PPa$ and $\tau>0$, we consider one step of the TV-JKO scheme:
\begin{equation}\label{onestepjko}
\inf_{\rho \in \PPa}  \Big\{ \frac{1}{2\tau} W_2^2(\rho_0, \rho)+ J(\rho) \Big\}.
\end{equation}
It is easy by the direct method of the calculus of variations to see that \pref{onestepjko} has at least one solution, moreover $J$ being convex and $\rho \mapsto W_2^2(\rho, \rho_{0})$ being strictly convex whenever $\rho_0 \in \PPa$ (see \cite{santambrogio2015optimal}), the minimizer is in fact unique, and in the sequel we denote it by  $\rho_1$.  Given $h>0$ we consider the following approximation of \pref{onestepjko}:
 \begin{equation}\label{onestepentr}
 \inf_{\rho\in \PPa} \Big\{   \F_h(\rho):= \frac{1}{2\tau} W_2^2(\rho_0, \rho)+J(\rho)+h \EE(\rho) \Big\} 
 \end{equation}
 where
 \[\EE(\rho):=\int_{\Omega} \rho(x) \log(\rho(x)) \mbox{d}x.\]
It is easy to  to see that \pref{onestepentr} admits a unique solution $\rho_h$. Moreover, since $\Omega$ is bounded, $\EE$ is lower bounded, hence  $J(\rho_h)$ is bounded. Recalling  that the embedding $BV(\Omega)\subset L^p(\Omega)$ is compact for every  $p\in[1, \frac{d}{d-1})$, one may therefore (up to extraction) assume that $\rho_h$ converges as $h\to 0$ a.e. and strongly in $L^p(\Omega)$ for every $p\in[1, \frac{d}{d-1})$ to some $\rho_1$, which, by a standard $\Gamma$-convergence argument, is easily seen to be the solution of \pref{onestepjko}. The advantage of this regularization is that not only each $\rho_h$ is bounded from below but also that $h \log(\rho_h)$ is bounded from below uniformly in $h$ (but not in $\tau$ which is fixed throughout this section):



\begin{prop}\label{boundbelowh}
Up to passing to a subsequence, the family $\beta_h :=h\log(\rho_h)$ is uniformly bounded from below. Moreover, $\beta_h$ is  bounded in $L^{p}(\Omega)$ for any $p>1$ and $\max(0, \beta_h)$ converges strongly to $0$  in $L^{p}(\Omega)$ for any $p>1$.

\end{prop}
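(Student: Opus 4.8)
The plan is to work entirely at the level of the regularized minimizer $\rho_h$, whose positivity (forced by the entropy $\EE$) makes the problem \pref{onestepentr} amenable to a first-order analysis, and to split the claim into an easy upper part and a delicate lower part.

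First I would record the optimality condition for \pref{onestepentr}. Since $\EE$ penalizes vanishing of the density, $\rho_h>0$ a.e., so the nonnegativity constraint is inactive and the subdifferential sum rule for the strictly convex functional $\F_h$ should yield a Kantorovich potential $\varphi_h$ between $\rho_h$ and $\rho_0$, a field $z_h$ with $\Vert z_h\Vert_{L^\infty}\le 1$ and $-\dive(z_h)\in\partial J(\rho_h)$ (so that $J(\rho_h)=\int_\Omega\dive(z_h)\rho_h$), and a multiplier $\lambda_h\in\R$ for the mass constraint, such that
\[ h\big(1+\log\rho_h\big)+\frac{\varphi_h}{\tau}-\dive(z_h)=\lambda_h \quad\text{a.e. in }\Omega. \]
Testing this identity against $\rho_h$ and using $\int_\Omega\dive(z_h)\rho_h=J(\rho_h)$, the boundedness of $\EE(\rho_h)$ and $J(\rho_h)$, and the uniform boundedness of the suitably normalized potentials $\varphi_h$ on the bounded set $\Omega$, I would deduce that $\lambda_h$ is bounded uniformly in $h$. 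Consequently $g_h:=\lambda_h-h-\varphi_h/\tau$ is bounded in $L^\infty$ uniformly in $h$, and $\beta_h=h\log\rho_h=g_h+\dive(z_h)$.

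The positive part is then immediate. Because $J(\rho_h)$ is bounded, the embedding $\BV(\Omega)\hookrightarrow L^{d/(d-1)}(\Omega)$ gives a uniform bound on $\Vert\rho_h\Vert_{L^{d/(d-1)}}$. For fixed $p>1$, picking $q>0$ with $pq<d/(d-1)$ and using $\log^+ t\le C_q t^q$, one obtains $\int_\Omega(\log^+\rho_h)^p\le C_q^p\int_\Omega\rho_h^{pq}\le C$ uniformly; hence $\max(0,\beta_h)=h\log^+\rho_h$ satisfies $\Vert\max(0,\beta_h)\Vert_{L^p}^p\le C h^p\to 0$. In particular $\beta_h$ is bounded above, and once the lower bound is available the $L^p$ boundedness of $\beta_h$ follows.

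The heart of the matter is the uniform lower bound, and this is where I expect the main difficulty. The strategy I would pursue is to test the identity, in the form $\dive(z_h)=\beta_h-g_h$, against the sublevel truncation $(\delta-\rho_h)_+$ for small thresholds $\delta$. The divergence term integrates by parts against $z_h$, and since $\Vert z_h\Vert_{L^\infty}\le 1$ it is controlled by $B(\delta):=\int_{\{\rho_h<\delta\}}|D\rho_h|$, which is bounded by $J(\rho_h)$ and tends to $0$ as $\delta\to0$; meanwhile the entropy term produces on $\{\rho_h<\delta\}$ the favourable coefficient $h\log(1/\delta)$, which dominates the bounded contribution of $g_h$. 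This yields, for $\delta$ small, an inequality of the form $\big(h\log(1/\delta)-G\big)\,A(\delta)\le B(\delta)$, where $A(\delta):=\int_{\{\rho_h<\delta\}}(\delta-\rho_h)$ and $G:=\sup_h\Vert g_h\Vert_{L^\infty}$. The obstacle is that this single estimate only bounds $A(\delta)$, and does not by itself force it to vanish; to upgrade it to the genuine pointwise bound $\rho_h\ge e^{-C/h}$, equivalently $\beta_h\ge -C$, I would couple it with the relative isoperimetric (equivalently $\BV$--Sobolev) inequality in $\Omega$, which relates $B(\delta)$ and the perimeters of the sublevel sets to their measures $|\{\rho_h<\delta\}|$, and then run a De Giorgi--Stampacchia iteration over a sequence of shrinking levels. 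Making this iteration close \emph{uniformly in $h$}, so that the threshold below which $A$ vanishes is exactly of order $e^{-C/h}$ with $C$ independent of $h$, is the crux; the logarithmic singularity of the entropy at $0$ is precisely what furnishes the quantitative gain needed, and the passage to a subsequence in the statement serves to fix once and for all the a.e.\ convergent family along which the constants are extracted.
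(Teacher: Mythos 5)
Your architecture is circular with respect to the paper's logic, and this is a genuine gap rather than a stylistic difference. In the paper, Proposition \ref{boundbelowh} is proved \emph{before}, and is then the key input to, the Euler--Lagrange equation (Proposition \ref{optih}): the uniform lower bound on $\rho_h$ is precisely what lets two-sided perturbations $\rho_h+t\mu$, $\mu\in L^\infty\cap\BV$, remain nonnegative, and what gives $h\log\rho_h$ the integrability needed to write the identity a.e. Your first step instead invokes a ``subdifferential sum rule'' to produce $h(1+\log\rho_h)+\varphi_h/\tau-\dive(z_h)=\lambda_h$ a.e.\ on the sole grounds that $\rho_h>0$ a.e. That is not enough: with no quantitative lower bound (not even an $h$-dependent one), the constraint $\rho\ge 0$ cannot be declared inactive, and ruling out a multiplier concentrated on the set where $\rho_h$ is small is essentially the content of the proposition you are trying to prove. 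The paper avoids first-order conditions altogether: it compares $\F_h(\rho_h)$ with the explicit competitor $\rho_{\eps,h}=\max(\rho_h,\eps)-c_{\eps,h}\chi_{F^h_{t_h}}/\vert F^h_{t_h}\vert$, estimates the four energy differences by $O(c_{\eps,h})$ with the entropy on $\{\rho_h\le\eps\}$ contributing the factor $1+\log\eps$, and chooses the level $t_h$ by the coarea formula along an a.e.-convergent subsequence so that $\Per(F^h_{t_h})/\vert F^h_{t_h}\vert$ and the local entropy constant $C_h(t_h)$ stay bounded uniformly in $h$; for $\eps$ below a threshold of order $e^{-C/h}$ this forces $c_{\eps,h}=0$, i.e.\ $\rho_h\ge\eps$.

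Even granting your Euler--Lagrange identity, the heart of your argument is explicitly unfinished: as you concede, the inequality $(h\log(1/\delta)-G)A(\delta)\le B(\delta)$ only bounds $A(\delta)$, and the De Giorgi--Stampacchia iteration you appeal to is never set up, let alone shown to close uniformly in $h$. The missing ingredient is in fact already contained in the optimality condition you wrote: $J(\rho_h)=\int_\Omega\dive(z_h)\rho_h$ with $\Vert z_h\Vert_{L^\infty}\le1$ and $z_h\cdot\nu=0$ forces, via Anzellotti's pairing, the measure identity $z_h\cdot D\rho_h=-\vert D\rho_h\vert$ (this is exactly how the paper interprets the condition after Theorem \ref{eulertvjko}); by the chain rule for the pairing, testing with $u=(\delta-\rho_h)_+$ then gives the \emph{equality} $\int_\Omega\dive(z_h)\,u=-\vert Du\vert(\Omega)=-B(\delta)$, with the favorable sign, rather than your one-sided bound $\ge -B(\delta)$. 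Inserted into $\dive(z_h)=\beta_h-g_h$ this yields $B(\delta)+\bigl(h\log(1/\delta)-G\bigr)A(\delta)\le 0$, whose two terms are both nonnegative once $\delta<e^{-G/h}$, so $A(\delta)=0$ and hence $\beta_h\ge -G$ in one stroke, with no iteration at all. So your route could in principle be repaired, but as written it has two genuine gaps: the unjustified (and, within the paper's development, circular) Euler--Lagrange equation, and the unexecuted ``crux''. By contrast, your treatment of the positive part is correct and even sharper than the paper's: $\log^+ t\le C_q t^q$ plus the $\BV(\Omega)\subset L^{d/(d-1)}(\Omega)$ embedding gives the quantitative bound $\Vert\max(0,\beta_h)\Vert_{L^p}\le Ch$, whereas the paper extracts a dominating function and argues by dominated convergence along a subsequence.
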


\begin{proof}
Let $t_h>0$ be such that the set $F_{t_h}^h := \{\rho_h >t_h\}$ has positive measure and finite perimeter (recall that $\rho_h\in \BV$). Let us assume that there is  an $\eps\in (0,1)$ such that 
\begin{equation}\label{smalleps}
 \eps  \le \frac{t_h \vert F_{t_h}^h \vert }{2\vert \Omega\vert}, 
\end{equation}
and
\begin{equation}
\vert A_{\eps, h}\vert >0 \mbox{ with } A_{\eps,h}:=\{\rho_h \le \eps\}.
\end{equation}
We aim to show that $\eps$ cannot be arbitrarily small.
Define then $\mu_{\eps, h}:=\max( \rho_h, \eps)$ that is $\eps$ on $A_{\eps, h}$ and $\rho_h$ elsewhere. Defining $c_{\eps,h}:=\int_{\Omega} (\mu_{\eps,h}- \rho_h)$ and observing that $c_{\eps,h} \le \eps \vert \Omega\vert $, we see that  \pref{smalleps} implies that $c_{\eps,h}\le \frac{1}{2} t_h \vert F_{t_h}^h \vert$ and $t_h \ge 2 \eps$ so that $A_{\eps,h}$ and $F_{t_h}^h$ are disjoint. Finally, set
\begin{equation}\label{defrhoepsh}
\rho_{\eps,h}:=\mu_{\eps,h}-c_{\eps,h} \frac{  \chi_{F_{t_h}^h}}{ \vert F_{t_h}^h\vert}.
\end{equation}
See Figure \ref{figrhoepsh}, where we set $\tilde c_{\eps,h} :=  c_{\eps,h}/ \vert F_{t_h}^h\vert$.

\begin{figure}
\includegraphics[width = \textwidth]{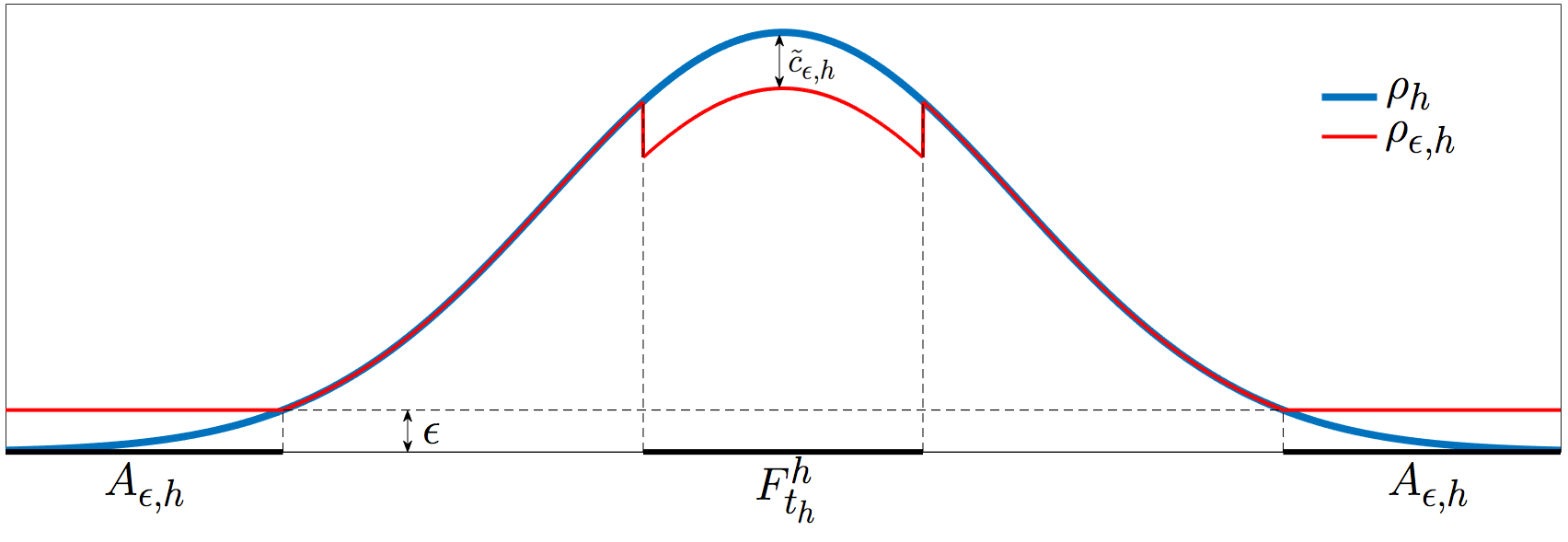}
\caption{The perturbation $\rho_{\eps, h}$ from  \pref{defrhoepsh}. \label{figrhoepsh}}
\end{figure}

\smallskip

By construction $\rho_{\eps,h} \in \PP(\Omega)$ hence $0\le \F_h(\rho_{\eps,h})-\F_h(\rho_h)$, in this difference we have four terms, namely 
\begin{itemize}

\item the Wasserstein term, which, using the Kantorovich duality formula \pref{dualformW2} and the fact that $\Omega$ is bounded can be estimated in terms of $\Vert \rho_{\eps,h}-\rho_h\Vert_{L^1} =2 c_{\eps,h}$:
\begin{equation}\label{estim1}
\frac{1}{2\tau} W_2^2(\rho_{\eps,h}, \rho_0)-\frac{1}{2\tau} W_2^2(\rho_h, \rho_0) \le \frac{C}{\tau}   c_{\eps,h}.
\end{equation}
for a constant $C$ that depends on $\Omega$ but neither on $\eps$ nor $h$,

\item the TV term: $J(\rho_{\eps,h})-J(\rho_h)$: outside $F_{t_h}^h$ we have replaced $\rho_h$ by a $1$-Lipschitz function of $\rho_h$ which decreases the TV semi-norm, on $F_{t_h}^h$ on the contrary we have created a jump of magnitude $c_{\eps, h} /\vert F_{t_h}^h\vert$ so
 \begin{equation}\label{estim2}
J(\rho_{\eps,h})-J(\rho_h) \le c_{\eps,h} \frac{\Per(F_{t_h}^h)}{\vert F_{t_h}^h\vert} 
\end{equation}
where $\Per(F_{t_h}^h)=J(\chi_{F_{t_h}^h})$ denotes the perimeter of $F_{t_h}^h$ (in $\Omega$),

\item the entropy variation on $A_{\eps,h}$, on this set both $\rho_{\eps,h}$ and $\rho_h$ are less than $\eps$ so that $(1+\log(t)) \le (1+\log(\eps))$ whenever $t\in [\rho_h, \rho_{\eps,h}]$ which by the mean value theorem
 yields 
 \begin{equation}\label{estim3}
\int_{A_{\eps,h}} (\rho_{\eps, h} \log(\rho_{\eps, h})     -\rho_h \log (\rho_h)) \le (1+\log(\eps)) c_{\eps,h}
\end{equation}

\item the last term is the entropy variation on $F_{t_h}^h$. It is convenient to split $F_{t_h}^h$ into  $F_{t_h}^h\cap \{\rho_{\eps, h} \ge \frac{1}{e}\}$ and $F_{t_h}^h\cap \{\rho_{\eps, h} < \frac{1}{e}\}$. The entropy variation on the first part is easy to control. Indeed, $t\mapsto t\log(t)$ is nondecreasing on $[\frac{1}{e}, +\infty)$. Since, on $F_{t_h}^h\cap \{\rho_{\eps, h} \ge \frac{1}{e}\}$,  $\rho_{h} \ge \rho_{\eps, h} \ge \frac{1}{e}$, we have $(\rho_{\eps, h} \log(\rho_{\eps,h})-\rho_h\log (\rho_h)) \le 0$. As for the second part,  we observe that $F_{t_h}^h \cap \{\rho_{\eps, h} <  \frac{1}{e}\} \subset \{ \rho_h \le \frac{1}{e}+\frac{t_h}{2}\}$,  so on this set, both $\rho_{\eps, h}$ and $\rho_h$ remain in the interval $[\frac{t_h}{2},  \frac{1}{e} +\frac{t_h}{2}]$. We thus have
\begin{equation}\label{estim4}
\int_{F_{t_h}^h}  (\rho_{\eps, h} \log(\rho_{\eps, h})     -\rho_h \log (\rho_h)) \le  C_h(t_h) c_{\eps,h},
\end{equation}
where 
\begin{equation}\label{defchmh}
C_h(t_h):=\max  \Big\{ \vert 1+ \log(t) \vert \;   : \; \frac{t_h}{2}   \leq t \leq   \frac{1}{e} +\frac{t_h}{2}  \Big\}.
\end{equation}

\end{itemize}

Putting together (\ref{estim1})-(\ref{estim2})-(\ref{estim3})-(\ref{estim4}), we  arrive at
\[0\le \Big(\frac{C}{\tau}+  \frac{\Per(F_{t_h}^h)}{\vert F_{t_h}^h\vert} +  h C_h(t_h)+ h   \log( \eps) +h \Big) c_{\eps,h}\]
which for small enough $\eps$ is possible only when $c_{\eps, h}=0$ i.e. $\vert A_{\eps,h}\vert=0$.  More precisely, either we have the lower bound:
\begin{equation}
\label{boundbelowteta}
 h \log(\rho_h) \ge -\frac{C}{\tau}-h C_h(t_h)- \frac{\Per(F_{t_h}^h)}{\vert F_{t_h}^h\vert} -h
\end{equation}
or \pref{smalleps} is impossible i.e.  $\rho_h \ge \frac{t_h \vert F_{t_h}^h \vert }{2\vert \Omega\vert}$. To prove that $\beta_h=h \log(\rho_h)$ is bounded from below uniformly in $h$, it is therefore enough to show that we can find a family $t_h$, bounded and bounded away from $0$,  such that $|F_{t_h}^h|$ remains bounded away from $0$, and $\mathrm{Per}(F_{t_h}^h)$ is uniformly bounded from above as $h\to 0$. First note that, since $J(\rho_h)$ is bounded,  there exists $\rho$ such that $\rho_h \to \rho$ in $L^1$ and a.e. up to a subsequence, note also that $\rho\in  \BV$ and $\rho$ is a probability density. Setting  $F_t:=\{\rho >t\}$, $F_t^h:=\{\rho_h >t\}$, if $s>t$, since $\rho_h$ converges a.e. to $\rho$, we have a.e.  $\liminf_h \chi_{F_t^h} \ge \chi_{F_s}$. It then follows from Fatou's Lemma that when $s>t$, $\liminf_h \vert F_t^h \vert \ge \vert F_s\vert$, hence  choosing    $0<\beta_1 <\beta_2<\beta$ so that $\vert F_{\beta} \vert >0$, we deduce that there exists $h_0>0$ and  $c_1>0$ such that for all $t\in [\beta_1,\beta_2]$ and all $h\in (0, h_0]$, we have  $c_1 \leq |F_t^{h} | \leq \vert \Omega\vert$. For an upper bound on perimeters, we observe that since  $J(\rho_h) \leq C$, thanks to the co-area formula, we have
$$
\int_{\beta_1}^{\beta_2}\Per(F_t^h) \mathrm{d}t\leq J(\rho_h)\leq C.
$$
So, there exists $t_h\in [\beta_1,\beta_2]$ such that
$
\Per(F_{t_h}^h)\leq C/(\beta_2-\beta_1).
$

\smallskip

Finally, since $\rho_h$ converges in $L^1$,  we may assume that, up to a subsequence, $\rho_h \le \phi$ for some $\phi \in L^1$ (see Theorem IV.9 in \cite{Brezis}). Then, by Dominated convergence and since $\log(\max(\phi,1))\in L^p(\Omega)$ for every $p>1$, we have that $\log(\max(\rho_h,1))$ converges a.e. and in $L^p$, in particular this implies that $\max(0, \beta_h)$ converges to $0$ strongly in $L^p(\Omega)$, and we have just seen that $\min(0, \beta_h)$ is bounded in $L^\infty( \Omega)$.

\end{proof}

Let us also recall some well-known facts (see \cite{chambolleintro})  about the total variation functional $J$ viewed as a convex l.s.c. and one-homogeneous functional on  $L^{\frac{d}{d-1}}(\Omega)$. Define 
\begin{equation}
\Gamma_d:=\Big\{ \xi\in L^d(\Omega) \; : \; \exists z\in L^{\infty} (\Omega, \R^d), \; \Vert z \Vert_{L^{\infty}} \le 1, \; \dive(z)=\xi, \; \; z\cdot \nu =0 \mbox{ on $\partial \Omega$}\Big\}
\end{equation}
where $\dive(z)=\xi, \;  z\cdot \nu =0$ on $\partial \Omega$ are to be understood in the weak sense
\[\int_{\Omega} \xi u=-\int_{\Omega} z \cdot \nabla u, \; \forall u\in C^1(\Omb).\]
Note that $\Gamma_d$ is closed and convex in $L^d(\Omega)$ and $J$ is its support function:
\begin{equation}\label{tvsupportf}
J(\mu)=\sup_{\xi \in \Gamma_d} \int_{\Omega} \xi \mu, \; \forall \mu \in L^{\frac{d}{d-1}}(\Omega).
\end{equation}
As for the Wasserstein term, recalling Kantorovich dual formulation \pref{dualformW2},  the derivative of the Wasserstein term $\rho \mapsto W^2_2(\rho_0, \rho)$ term will be expressed in terms of a Kantorovich potential between $\rho$ and $\rho_0$.

\smallskip

We then have the following characterization for $\rho_h$:

\begin{prop}\label{optih}
There exists $z_h \in L^{\infty}(\Omega, \R^d)$ such that $\dive(z_h)\in L^{p}(\Omega)$ for every $p\in [1,+\infty)$, $\Vert z_h \Vert_{L^{\infty}} \le 1$, $z_h \cdot \nu=0$ on $\partial \Omega$, $J(\rho_h)=\int_{\Omega} \dive(z_h) \rho_h$ and
\begin{equation}\label{och}
\frac{\varphi_h}{\tau}+ \dive(z_h)+h \log(\rho_h)=0, \mbox{ a.e. in $\Omega$} 
\end{equation}
where $\varphi_h$ is the Kantorovich potential between $\rho_h$ and $\rho_0$.
\end{prop}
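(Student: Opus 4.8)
The plan is to obtain \pref{och} as the first-order optimality condition for the convex problem \pref{onestepentr}, exploiting the fact, established in Proposition \ref{boundbelowh}, that for fixed $h$ the minimizer $\rho_h$ is bounded away from zero. This lower bound has two consequences: the positivity constraint $\rho\ge 0$ is inactive at $\rho_h$, and the entropy $\EE$ becomes differentiable along admissible perturbations, its derivative $h(1+\log(\rho_h))$ lying in every $L^p(\Omega)$ (indeed $\log(\rho_h)$ is bounded below, while $\log^+(\rho_h)\le C_{p,\delta}\,\rho_h^{\delta}$ with $\delta$ so small that $\rho_h^{p\delta}\in L^1$ by the embedding $\BV\subset L^{\frac{d}{d-1}}$).

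First I would record the first variations of the two ``smooth'' terms. Along the segment $\rho_s:=\rho_h+s(\rho-\rho_h)$, with $\rho\in\PPa$ of bounded density and $s\in[0,1]$, the entropy contributes $h\int_{\Omega}(1+\log(\rho_h))(\rho-\rho_h)$, while the Wasserstein term contributes $\frac{1}{\tau}\int_{\Omega}\varphi_h(\rho-\rho_h)$, where $\varphi_h$ is a Kantorovich potential between $\rho_h$ and $\rho_0$. Since $\rho_h$ is absolutely continuous, this potential is unique up to an additive constant and is precisely the first variation of $\rho\mapsto\frac{1}{2\tau}W_2^2(\rho_0,\rho)$; this is the standard computation recalled after \pref{dualformW2}, and is exactly the subgradient inequality used in the proof of Lemma \ref{1dsufficient}.

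The TV term is handled by duality, which is where the real work lies. Using the representation \pref{tvsupportf} of $J$ as the support function of the closed convex set $\Gamma_d$, problem \pref{onestepentr} is recast as the saddle-point problem $\inf_{\rho}\sup_{\xi\in\Gamma_d}\{\frac{1}{2\tau}W_2^2(\rho_0,\rho)+\int_{\Omega}\xi\rho+h\EE(\rho)\}$. Exchanging the inf and the sup --- justified by convexity in $\rho$, concavity (linearity) in $\xi$, and the weak-$*$ compactness of the admissible fields $\{z:\Vert z\Vert_{L^\infty}\le 1,\ z\cdot\nu=0\}$ from which $\Gamma_d$ is built --- yields a maximizer $\xi_h=\dive(z_h)$, and the saddle condition delivers simultaneously the optimality of $z_h$, namely $J(\rho_h)=\int_{\Omega}\dive(z_h)\rho_h$, and the stationarity of $\F_h$ in $\rho$ at $\rho_h$. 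Writing the latter while accounting for the Lagrange multiplier $\lambda_h$ of the mass constraint $\int\rho=1$ gives $\frac{\varphi_h}{\tau}+\dive(z_h)+h(1+\log(\rho_h))+\lambda_h=0$ a.e.; absorbing the constant $\tau(h+\lambda_h)$ into $\varphi_h$ (permissible since Kantorovich potentials are defined only up to a constant) produces exactly \pref{och}. Finally, the asserted regularity $\dive(z_h)=-\frac{\varphi_h}{\tau}-h\log(\rho_h)\in L^p(\Omega)$ for every $p\in[1,\infty)$ follows because $\varphi_h$ is bounded on the bounded domain $\Omega$ and $\log(\rho_h)\in L^p$ by the estimate of the first paragraph.

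The main obstacle is the rigorous exchange of inf and sup (equivalently, the rigorous use of the subdifferential sum rule $0\in\partial[\frac{1}{2\tau}W_2^2(\rho_0,\cdot)](\rho_h)+\partial J(\rho_h)+\partial[h\EE](\rho_h)+N(\rho_h)$, with $N$ the normal cone of the mass constraint). Two points require care: the bilinear pairing $\int_{\Omega}\dive(z)\rho$ must be read in the weak sense $\int_{\Omega}z\,\mathrm{d}D\rho$ for $z\in L^{\infty}$ and $\rho\in\BV$, and one must verify the semicontinuity and compactness needed to apply a minimax theorem and to ensure that the optimal $\xi_h$ is attained inside $\Gamma_d$ rather than only in some relaxation. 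A convenient way to secure the required qualification is to exploit the continuity of the Wasserstein term on $\PPa$; alternatively, one can establish $\xi_h\in\partial J(\rho_h)$ directly through the Fenchel identity $J(\rho_h)+J^*(\xi_h)=\int_{\Omega}\xi_h\rho_h$ with $J^*=\iota_{\Gamma_d}$, and then recover $z_h$ from the very definition of $\Gamma_d$.
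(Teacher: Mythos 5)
Your setup (the lower bound from Proposition \ref{boundbelowh} making the positivity constraint inactive, the first variation of the entropy, the derivative of the Wasserstein term) agrees with the paper, but the core of your argument --- the minimax exchange with attainment of the supremum at some $\xi_h=\dive(z_h)\in\Gamma_d$ --- has a genuine gap, and it sits exactly where the substance of the proposition lies. Weak-$*$ compactness of $\{z\,:\,\Vert z\Vert_{L^\infty}\le 1,\ z\cdot\nu=0\}$ does not help: $\Gamma_d$ itself is unbounded in $L^d(\Omega)$ (there is no bound on $\dive(z)$), a weak-$*$ limit of fields with divergence in $L^d$ need not have divergence in $L^d$, and the pairing $z\mapsto\int_\Omega z\cdot \mathrm{d}D\rho$ is not weak-$*$ continuous against the part of $D\rho$ singular with respect to Lebesgue measure, which is precisely the relevant part here. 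Even granting a minimax theorem that equates the inf-sup and sup-inf \emph{values}, this would not give dual attainment on the noncompact set $\Gamma_d$; and attainment can genuinely fail, since for a general $\rho\in\BV(\Omega)\cap L^{\frac{d}{d-1}}(\Omega)$ the set $\{\xi\in\Gamma_d : \int_\Omega\xi\rho=J(\rho)\}$ --- the subdifferential of $J$ at $\rho$ --- may be empty. So no soft compactness/duality argument can produce $z_h$; one must exploit the explicit candidate furnished by the two differentiable terms.

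The repair is precisely the route you mention only in passing at the end, and it is the paper's actual proof. Define $\xi_h:=-\frac{\varphi_h}{\tau}-h\log(\rho_h)$. For $\mu\in L^{\infty}(\Omega)\cap\BV(\Omega)$ with zero mean, the lower bound on $\rho_h$ makes $\rho_h+t\mu$ admissible for small $t>0$; optimality of $\rho_h$, the derivative formula \pref{derivW}, the differentiability of the entropy, and the subadditivity plus convexity of $J$ yield the variational inequality $J(\mu)\ge J(\rho_h+\mu)-J(\rho_h)\ge\int_\Omega\xi_h\mu$. Choosing the free additive constant in $\varphi_h$ so that $\xi_h$ has zero mean removes the zero-mean restriction on $\mu$ and simultaneously disposes of your Lagrange multiplier $\lambda_h$, whose existence you assert but never justify. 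Your $L^p$ estimates on $\log(\rho_h)$ give $\xi_h\in L^p(\Omega)$ for all $p<\infty$, in particular $\xi_h\in L^d(\Omega)$, so the inequality extends by density to all $\mu\in L^{\frac{d}{d-1}}(\Omega)$; since $J$ is the support function \pref{tvsupportf} of the closed convex set $\Gamma_d\subset L^d(\Omega)$, Hahn--Banach separation (your Fenchel identity $J^*=\,$indicator of $\Gamma_d$, made rigorous) yields $\xi_h\in\Gamma_d$, which produces $z_h$ with the stated properties and \pref{och}. Finally, the equality $J(\rho_h)=\int_\Omega\dive(z_h)\rho_h$ cannot be read off a saddle condition, since no saddle point has been shown to exist; it follows instead by taking $\mu=-\rho_h$ in $J(\rho_h+\mu)-J(\rho_h)\ge\int_\Omega\xi_h\mu$, where one-homogeneity gives $-J(\rho_h)\ge-\int_\Omega\xi_h\rho_h$, combined with the reverse inequality $J(\rho_h)\ge\int_\Omega\xi_h\rho_h$ from \pref{tvsupportf}.
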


\begin{proof} 
 Let $\mu \in L^{\infty}(\Omega)\cap \BV(\Omega)$ such that $\int_{\Omega} \mu=0$. Thanks to Proposition \ref{boundbelowh}, we know that $\rho_h$ is bounded away from $0$ hence for small enough $t>0$, $\rho_h + t \mu$ is positive hence a probability density. Also, as a consequence of Theorem 1.52 in \cite{santambrogio2015optimal}, we have that 
\begin{equation}\label{derivW}
\lim_{t\to 0^+} \frac{1}{2t}[W_2^2(\rho_0, \rho_h+t \mu)-W_2^2(\rho_0, \rho_h)]=\int_{\Omega} \varphi_h   \mu
\end{equation}
where $\varphi_h$ is the (unique up to an additive constant) Kantorovich potential between $\rho_h$ and $\rho_0$, in particular $\varphi_h$  is Lipschitz and semi concave ($D^2 \varphi_h \le \id$ in the sense of measures and $\id-\nabla \varphi_h$ is the optimal transport between $\rho_h$ and $\rho_1$). By the optimality of $\rho_h$ and the fact that $J$ is a semi-norm, we get
\begin{equation}\label{varineq}
J(\mu) \ge J(\rho_h+\mu)-J(\rho_h)\ge \lim_{t\to 0^+} t^{-1} (J(\rho_h+t \mu)-J(\rho_h))    \ge \int_{\Omega} \xi_h \mu,
\end{equation}
 where 
 \[ \xi_h:=-\frac{\varphi_h}{\tau} -h \log(\rho_h).\]
Since $\varphi_h$ is defined up to an additive constant, we may chose it in such a way that $\xi_h$ has zero mean, doing so, \pref{varineq} holds for any $\mu \in L^{\infty}(\Omega)\cap \BV(\Omega)$ (not necessarily with zero mean). Being Lipschitz, $\varphi_h$ is bounded,  also observe that $h(\log(\rho_h))_+=h\log(\max(1, \rho_h))$ is in $L^p(\Omega)$ for every $p\in [1,+\infty)$ since $\rho_h\in L^{\frac{d}{d-1}}(\Omega)$ and $h \log(\rho_h)_-=-h\log(\min(1, \rho_h))$ is $L^{\infty}(\Omega)$ thanks to Proposition \ref{boundbelowh}, hence we have $\xi_h \in L^{p}(\Omega)$ for every $p\in [1,+\infty)$.

 By approximation and observing that $\xi_h \in L^{d}(\Omega)$, \pref{varineq} extends to all $\mu\in L^{\frac{d}{d-1}}(\Omega)$. In particular, we have
\[\sup_{\xi \in \Gamma_d} \int_{\Omega} \xi \mu \ge \int_{\Omega} \xi_h \mu\]
but since $\Gamma_d$ is convex and closed in $L^d(\Omega)$, it follows from Hahn-Banach's separation theorem that $\xi_h \in \Gamma_d$. Finally, getting back to \pref{varineq} (without the zero mean restriction on $\mu$) and taking $\mu=-\rho_h$ gives $J(\rho_h)\leq \int_{\Omega} \xi_h \rho_h$, and we then deduce  that this should be an equality.

\smallskip
 
\end{proof}

 \subsection{Euler-Lagrange equation}\label{subsec-oc}

We are now in position to rigorously establish the Euler-Lagrange equation for \pref{onestepjko}:

\begin{thm}\label{eulertvjko}
If $\rho_1$ solves \pref{onestepjko}, there exists $\varphi$ a Kantorovich potential between $\rho_1$ and $\rho_0$ (in particular $\id -\nabla \varphi$ is the optimal transport between $\rho_1$ and $\rho_0$), $\beta\in L^{\infty} (\Omega)$, $\beta\ge 0$ and $z\in L^{\infty}(\Omega, \R^d)$ such that
\begin{equation}\label{eulerjko}
\frac{\varphi}{\tau}+\dive(z)=\beta, \; z\cdot{\nu} =0 \mbox{ on $\partial \Omega$}, 
\end{equation}
and 
\begin{equation}\label{slackjko}
\beta \rho_1=0, \; \; \Vert z \Vert_{L^{\infty}} \le 1, \; J(\rho_1)=\int_{\Omega} \dive(z) \rho_1. 
\end{equation}

\end{thm}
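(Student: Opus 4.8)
The plan is to pass to the limit $h\to 0^+$ in the optimality system of the entropic problem \pref{onestepentr} furnished by Proposition \ref{optih}. Recall that, up to a subsequence, $\rho_h\to\rho_1$ a.e.\ and strongly in $L^p(\Omega)$ for every $p\in[1,\frac{d}{d-1})$, so in particular $W_2(\rho_h,\rho_1)\to 0$. First I would extract weak limits of the remaining objects. Since $\Vert z_h\Vert_{L^\infty}\le 1$, a further subsequence gives $z_h\rightharpoonup z$ weakly-$*$ in $L^\infty(\Omega,\R^d)$ with $\Vert z\Vert_{L^\infty}\le 1$ by lower semicontinuity of the norm. The potentials $\varphi_h$ are Lipschitz with constant at most $\mathrm{diam}(\Omega)$ (as $\nabla\varphi_h=\id-T_h$ with $T_h$ valued in $\Omb$) and, with the normalization making $\xi_h$ zero-mean, have bounded average (since $\int_\Omega\varphi_h=-\tau\int_\Omega\beta_h$ and $\beta_h$ is bounded in $L^1$ by Proposition \ref{boundbelowh}); hence by Arzel\`a--Ascoli $\varphi_h\to\varphi$ uniformly along a subsequence, and by the standard stability of Kantorovich potentials under $W_2$-convergence of the marginals, $\varphi$ is a Kantorovich potential between $\rho_1$ and $\rho_0$. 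Finally, writing $\beta_h=\max(0,\beta_h)+\min(0,\beta_h)$, Proposition \ref{boundbelowh} gives that $-\min(0,\beta_h)\ge 0$ is bounded in $L^\infty$ while $\max(0,\beta_h)\to 0$ strongly in every $L^p$; thus $-\beta_h\rightharpoonup\beta$ weakly-$*$ in $L^\infty$ with $\beta\ge 0$ and $\beta\in L^\infty(\Omega)$.

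With these limits I would pass to the limit in the weak form of \pref{och}, namely $\int_\Omega(\frac{\varphi_h}{\tau}+\beta_h)u=\int_\Omega z_h\cdot\nabla u$ for all $u\in C^1(\Omb)$. The left-hand side converges to $\int_\Omega(\frac{\varphi}{\tau}-\beta)u$ (uniform convergence of $\varphi_h$ and weak-$*$ convergence of $\beta_h\rightharpoonup-\beta$), and the right-hand side to $\int_\Omega z\cdot\nabla u$ (weak-$*$ convergence of $z_h$ tested against $\nabla u\in L^1$). This is precisely the weak formulation of \pref{eulerjko}: $\frac{\varphi}{\tau}+\dive(z)=\beta$ with $z\cdot\nu=0$ on $\partial\Omega$. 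Note that then $\dive(z)=\beta-\varphi/\tau\in L^\infty(\Omega)\subset L^d(\Omega)$.

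The crux is the complementary slackness $\beta\rho_1=0$. The key auxiliary fact is that $h\EE(\rho_h)\to 0$: the lower bound follows from $\EE(\rho_h)\ge-\log|\Omega|$, while for the upper bound I would test the minimality of $\rho_h$ in \pref{onestepentr} against $\rho_1$ itself, which has finite entropy because $\rho_1\in L^{\frac{d}{d-1}}(\Omega)$ and $\rho_1\log_+\rho_1\lesssim\rho_1^{\frac{d}{d-1}}\in L^1$; combined with $W_2(\rho_h,\rho_1)\to 0$ and the lower semicontinuity of $J$, this yields $\limsup_h h\EE(\rho_h)\le 0$. Then I would write $\int_\Omega(-\beta_h)\rho_h=-h\EE(\rho_h)\to 0$, and identify the limit of the same integral as $\int_\Omega\beta\rho_1$ by splitting $\beta_h=\max(0,\beta_h)+\min(0,\beta_h)$: the $L^\infty$-bounded part $-\min(0,\beta_h)\rightharpoonup\beta$ tested against $\rho_h\to\rho_1$ in $L^1$ converges, and the part $\max(0,\beta_h)$, small in $L^d$, is controlled by H\"older against $\rho_h$ bounded in $L^{\frac{d}{d-1}}$. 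Hence $\int_\Omega\beta\rho_1=0$, and since $\beta,\rho_1\ge 0$ this forces $\beta\rho_1=0$ a.e.

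It remains to prove $J(\rho_1)=\int_\Omega\dive(z)\rho_1$. Since $\F_h(\rho_h)$ converges to the value of \pref{onestepjko} by the $\Gamma$-convergence argument, and both $W_2^2(\rho_0,\rho_h)\to W_2^2(\rho_0,\rho_1)$ and $h\EE(\rho_h)\to 0$, I deduce $J(\rho_h)\to J(\rho_1)$. Using $J(\rho_h)=\int_\Omega\dive(z_h)\rho_h=-\frac{1}{\tau}\int_\Omega\varphi_h\rho_h-\int_\Omega\beta_h\rho_h$ and passing to the limit (uniform convergence of $\varphi_h$, $L^1$-convergence of $\rho_h$, and $\int_\Omega\beta_h\rho_h=h\EE(\rho_h)\to 0$) gives $J(\rho_1)=-\frac{1}{\tau}\int_\Omega\varphi\rho_1=\int_\Omega(\beta-\frac{\varphi}{\tau})\rho_1=\int_\Omega\dive(z)\rho_1$, the last step using $\beta\rho_1=0$. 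I expect the main obstacle to be the slackness step: the product $\int_\Omega(-\beta_h)\rho_h$ must be handled carefully because $-\beta_h$ is bounded only in $L^p$ (not $L^\infty$) where it is negative, which is exactly why the vanishing of the rescaled entropy and the splitting of $\beta_h$ into its positive and negative parts are essential.
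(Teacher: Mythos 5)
Your proof is correct, and its overall scaffold coincides with the paper's: both pass to the limit $h\to 0$ in the entropic optimality system of Proposition \ref{optih}, with the same compactness ingredients (weak-$*$ limit of $z_h$, equi-Lipschitz Kantorovich potentials normalized via the zero-mean choice of $\xi_h$, and the splitting $\beta_h=\beta_h^+-\beta_h^-$ with $\beta_h^+\to 0$ strongly and $\beta_h^-$ bounded in $L^\infty$ from Proposition \ref{boundbelowh}), and the same passage to the limit yielding \pref{eulerjko}. Where you genuinely diverge is in the two closing steps. For the slackness $\beta\rho_1=0$, the paper argues in one line from the pointwise bound $t\vert \log t\vert \le 1/e$ on $[0,1]$, which gives $\int_\Omega \rho_h\beta_h^- = h\int_\Omega \rho_h\vert\log\min(1,\rho_h)\vert \le h\vert\Omega\vert/e \to 0$ directly, with no entropy argument needed; you instead first establish $h\EE(\rho_h)\to 0$ (by testing minimality against $\rho_1$, whose entropy is finite since $\rho_1\in L^{\frac{d}{d-1}}$, plus l.s.c.\ of $J$ and $W_2$-continuity) and then use the exact identity $\int_\Omega(-\beta_h)\rho_h=-h\EE(\rho_h)$ with the positive/negative splitting and H\"older. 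Both are valid; the paper's is shorter, while yours is more robust (it would survive regularizers for which no such pointwise bound is available) and yields the energy convergence $J(\rho_h)\to J(\rho_1)$ as a by-product. For the identity $J(\rho_1)=\int_\Omega\dive(z)\rho_1$, the paper proves $\ge$ from the support-function characterization \pref{tvsupportf} (noting $\dive(z)\in L^\infty$, so $\dive(z)\in\Gamma_d$) and $\le$ from l.s.c.\ together with the weak--strong pairing of $\dive(z_h)\rightharpoonup\dive(z)$ in $L^q$ against $\rho_h\to\rho_1$ in $L^{q'}$, whereas you exploit $J(\rho_h)\to J(\rho_1)$ and pass to the limit in $J(\rho_h)=-\frac1\tau\int_\Omega\varphi_h\rho_h-h\EE(\rho_h)$, closing with the already-proved slackness; this also works. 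One phrasing slip worth fixing: $-\beta_h$ itself is not bounded in $L^\infty$ (only $\beta_h^-$ is, while $\beta_h^+$ is merely bounded in $L^p$, $p<\infty$, since $\rho_h$ need not be bounded above here), so the assertion that $-\beta_h\rightharpoonup\beta$ weakly-$*$ in $L^\infty$ should be stated as: $\beta_h^-\rightharpoonup\beta$ weakly-$*$ in $L^\infty$ and $\beta_h^+\to 0$ strongly, whence $-\beta_h\rightharpoonup\beta$ weakly in every $L^p$ with $p<\infty$ --- which is precisely the form your subsequent pairings use, so nothing in the argument breaks.
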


\begin{rem}
It is not difficult (since \pref{onestepjko} is a convex problem) to check that \pref{eulerjko}-\pref{slackjko} are also sufficient optimality conditions. The main point here is that the right hand side $\beta$ in \pref{eulerjko}  which is a multiplier associated with the nonnegativity constraint is better than a measure, it is actually an $L^{\infty}$ function.
\end{rem}

\begin{proof}
As in section \ref{subsec-entreg}, we denote by $\rho_h$ the solution of the entropic approximation \pref{onestepentr}. Up to passing to a subsequence (not explicitly written),  we may assume that $\rho_h$ converges a.e. and strongly in $L^p(\Omega)$ (for any $p\in [1, \frac{d}{d-1})$) to $\rho_1$ (the solution of \pref{onestepjko}, again by a standard $\Gamma$-convergence argument). We then rewrite the Euler-Lagrange equation from Proposition \ref{optih} as
\begin{equation}
\frac{\varphi_h}{\tau}+ \dive(z_h)+ \beta_h^+=\beta_h^-, 
\end{equation}
where $\beta_h^+:=h\log(\max(\rho_h, 1))$, $\beta_h^-:=-h\log(\min(\rho_h, 1))$,
and 
\begin{equation}
\Vert z_h \Vert_{L^{\infty}}\le 1, \; z_h \cdot \nu=0 \mbox{ on $\partial \Omega$ and } J(\rho_h)=\int_{\Omega} \dive(z_h) \rho_h.
\end{equation}
It follows from Proposition \ref{boundbelowh}  that $\beta_h^+$ converges to $0$ strongly in any $L^p$, $p\in [1, +\infty)$ and  that $\beta_h^-$ is bounded in $L^{\infty}$. Up to subsequences, we may therefore assume that $z_h$ and $\beta_h^-$ weakly-$*$ converge in $L^{\infty}$ respectively to some  $z$ and $\beta$ with $\Vert z \Vert_{L^{\infty}}\le 1$, $z\cdot \nu=0$ on $\partial \Omega$ and $\beta \ge 0$. As for the Kantorovich potentials $\varphi_h$,  since the transport map $(\id-\nabla \varphi_h)$ a.e. takes values in $\Omega$ we have $\Vert \nabla \varphi_h \Vert_{L^{\infty}} \le {\rm{diam}}(\Omega)$, hence  $\varphi_h$ is an equi-Lipschitz family because $\Omega$ is bounded. Moreover $\int_{\Omega} \varphi_h=\tau \int_{\Omega} (\beta_h^- -\beta_h^+)$ which remains bounded, hence we may assume that $\varphi_h$ converges uniformly to some potential $\varphi$ and it is well-known (see \cite{santambrogio2015optimal}) that $\varphi$ is a Kantorovich potential between $\rho_1$ and $\rho_0$. Letting $h$ tend to $0$ gives \pref{eulerjko}.

\smallskip

Since $\rho_h$ converges strongly in $L^1$ to $\rho_1$ and $\beta_h^{-}$ converges weakly-$*$ to $\beta$ in $L^{\infty}$ we have
\[\int_{\Omega} \rho_1 \beta=\lim_h \int_{\Omega} \rho_h \beta_h^{-} = \lim_h h\int_{\Omega} \rho_h  \vert \log(\min (1, \rho_h))\vert=0, \]
hence $\beta \rho_1=0$. Thanks to \pref{tvsupportf}, we obviously have $J(\rho_1)\ge \int_{\Omega} \dive(z) \rho_1$ (since $\dive(z)\in L^{\infty}$,  $\dive(z)\in \Gamma_d$), for the converse inequality, it is enough to observe that
\[J(\rho_1)\le \liminf_h J(\rho_h)=\liminf_h \int_{\Omega} \dive(z_h) \rho_h\]
and that $\dive(z_h)=-\frac{\varphi_h}{\tau}-\beta_h^++ \beta_h^{-}$ converges to $\dive(z)$ weakly in $L^q$ for every $q\in [1, +\infty)$. Since $\rho_h$ converges strongly to $\rho_1$ in $L^q$ when $q\in [1, \frac{d}{d-1})$ we deduce that $J(\rho_1)=\int_{\Omega} \dive(z) \rho_1$ which completes the proof of \pref{slackjko}.

\end{proof}


A first consequence of  the high integrability of $\dive(z)$ is that one can give a meaning to $z \cdot \nabla u$ for any $u \in \BV(\Omega)$.  Indeed, if $q\in [\frac{d}{d-1}, +\infty]$ and $q'$ denotes its conjugate exponent,  following Anzellotti \cite{Anzellotti},  if $u\in \BV(\Omega)\cap L^q(\Omega)$ and  $\sigma \in L^{\infty}(\Omega, \R^d)$ is such that $\dive(\sigma) \in L^{q'}(\Omega)$, one can define the distribution $\sigma \cdot Du$ by
\[\langle \sigma \cdot Du,  v \rangle =- \int_{\Omega}  \dive(\sigma) \; u  v  - \int_\Omega u \; \sigma \cdot \nabla  v, \; \forall v\in C_c^1(\Omega).\]
Then $\sigma \cdot Du$ is a Radon measure which satisfies  $\vert \sigma \cdot Du \vert \le \Vert \sigma \Vert_{L^{\infty}} \vert Du \vert$ (in the sense of measures) hence  is absolutely continuous with respect to $\vert D u\vert$.  Moreover one can also define a weak notion of normal trace of $\sigma$, $\sigma \cdot \nu \in L^{\infty}(\partial \Omega)$ such that the following integration by parts formula holds
\[\int_{\Omega} \sigma \cdot Du=-\int_{\Omega} \dive(\sigma) u+ \int_{\partial \Omega} u (\sigma \cdot \nu).\]
We refer to \cite{Anzellotti} for proofs. These considerations of course apply to $\sigma=z$ and $u=\rho_1\in \BV(\Omega)$ and in particular enable one to see $z\cdot D\rho_1$ as a measure and to interpret the optimality condition $J(\rho_1)=\int_{\Omega} \dive(z) \rho_1$ as $\vert D \rho_1 \vert=-z \cdot D \rho_1$ in the sense of measures. Finally, the fact that $\dive(z)\in L^\infty$  in  Theorem \ref{eulertvjko}  and the theory of variational mean curvature (see Tamanini  \cite{tamanini1982boundaries}, Massari  \cite{zbMATH03477953,massari1975frontiere}, Theorem 3.6 of  Gonzalez and Massari \cite{massari1994variational}) allows for conclusions about the regularity of the level sets, $F_t = \{\rho_1>t\}$  of $\rho_1$, the solution of \eqref{onestepjko}, we do not elaborate this regularity (which, anyway, only holds for fixed time step $\tau>0$) further here.

\section{Maximum and minimum principles for JKO steps}\label{sec-max}

Throughout this section, we further assume that $\Omega$ is a convex open bounded subset of $\R^d$, our aim is to establish bounds on the TV-JKO iterates given by \pref{onestepjko}. Since, the TV-JKO scheme aims at minimizing total variation at the fastest rate in the Wasserstein metric, it is natural to wonder whether when the initial condition is bounded from above and from below then the JKO-iterates remain so (with the same bounds). We shall answer affirmatively for the upper bound (maximum principle), as for the propagation of the lower bound (minimum principle), we  have been able to prove it only in special cases (dimension one and radially symmetric setting). 

 \subsection{Convexity along generalized geodesics}\label{subsec-prel}

Our aim is to deduce some bounds on $\rho_1$ from bounds on $\rho_0$. To do so, we shall combine some convexity arguments and a remarkable $\BV$ estimate due to De Philippis et al. \cite{de2016bv}. First we recall the notion of generalized geodesic from Ambrosio, Gigli and Savar\'e \cite{ambrosio2008gradient}. Given $\ovmu$, $\mu_0$ and $\mu_1$ in $\PPa$, and denoting by $T_0$ (respectively $T_1$) the optimal transport (Brenier) map between $\ovmu$ and $\mu_0$ (respectively $\mu_1)$, the \emph{generalized geodesic with base}  $\ovmu$ joining $\mu_0$ to $\mu_1$ is by definition the curve of measures:
\begin{equation}\label{defggeod}
\mu_t:=((1-t) T_0 +t T_1)_\# \ovmu, \; t\in [0,1]. 
\end{equation}
A key property of these curves introduced in \cite{ambrosio2008gradient} is the strong convexity of the squared distance estimate:
\begin{equation}\label{contw2}
W_2^2(\ovmu, \mu_t) \le (1-t) W_2^2(\ovmu, \mu_0)+ t  W_2^2(\ovmu, \mu_1)-t(1-t)  W_2^2( \mu_0, \mu_1).
\end{equation}
It is well-known that if $G$ : $\R_+\to \R \cup\{+\infty\}$ is a proper convex lower semi-continuous (l.s.c.) internal energy density, bounded from below such that $G(0)=0$ and which satisfies McCann's condition (see \cite{mcc97}) 
\begin{equation}\label{mccond}
\lambda \in \R_+ \to \lambda^d G(\lambda^{-d}) \mbox{ is convex nonincreasing}
\end{equation}
then defining the generalized geodesic curve $(\mu_t)_{t\in [0, 1]}$ by \pref{defggeod}, one has
\begin{equation}\label{geodconvex}
\int_{\Omega} G(\mu_t(x)) \mbox{d}x \le (1-t) \int_{\Omega} G(\mu_0(x)) \mbox{d} x+ t \int_{\Omega} G(\mu_1(x)) \mbox{d} x.
\end{equation}
In particular $L^p$ and uniform bounds are stable along generalized  geodesics:
\begin{equation}\label{boundsconvex}
\Vert \mu_t \Vert_{L^p}^p \le (1-t) \Vert \mu_0\Vert_{L^p}^p+ t \Vert \mu_0\Vert_{L^p}^p, \; \Vert \mu_t\Vert_{L^\infty} \le \max (\Vert \mu_0 \Vert_{L^\infty}, \Vert \mu_1\Vert_{L^\infty}),
\end{equation}
and 
\begin{equation}\label{entropybound}
\int_{\Omega} \mu_t(x)  \log(\mu_t(x)) \mbox{d}x \le (1-t) \int_{\Omega}  \mu_0(x) \log(\mu_0(x)) \mbox{d}x + t \int_{\Omega}  \mu_1(x) \log(\mu_1(x)) \mbox{d}x 
\end{equation}

An immediate consequence of \pref{contw2} (see chapter 4 of \cite{ambrosio2008gradient} for general contraction estimates)  is the following

\begin{lem}\label{distcont}
Let $K$ be a nonempty subset of $\PPa$, let $\mu_0 \in K$, $\mu_1\in \PPa$, if $\hat{\mu}_1 \in \argmin_{\mu \in K} W^2_2(\mu_1, \mu)$ is a Wasserstein projection of $\mu_1$ onto $K$, and if the generalized geodesic with base $\mu_1$ joining $\mu_0$ to $\hat{\mu}_1$ remains in $K$ then
\begin{equation}
W_2^2(\mu_0, \hat{\mu}_1) \le W_2^2(\mu_0, \mu_1)- W_2^2(\mu_1, \hat{\mu}_1). 
\end{equation}
\end{lem}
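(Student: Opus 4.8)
The plan is to mimic the classical Hilbert-space fact that projection onto a convex set makes an obtuse angle: if $P$ denotes the projection onto a convex set and $x_0$ is a point, then $|x-Px_0|^2 \le |x-x_0|^2 - |x_0-Px_0|^2$ for every $x$ in the set. Here the role of the Hilbertian parallelogram identity is played by the strong convexity estimate \pref{contw2} along generalized geodesics, and the role of convexity of the target set is played precisely by the hypothesis that the generalized geodesic joining $\mu_0$ to $\hat{\mu}_1$ remains in $K$.

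Concretely, I would first let $S_0$ and $S_1$ be the Brenier maps transporting $\mu_1$ onto $\mu_0$ and onto $\hat{\mu}_1$ respectively (these exist and are unique since $\mu_1\in\PPa$), and form the generalized geodesic with base $\mu_1$,
\[
\nu_t := ((1-t)S_0 + t S_1)_\# \mu_1, \quad t\in[0,1],
\]
so that $\nu_0=\mu_0$ and $\nu_1=\hat{\mu}_1$; this is exactly the curve assumed to lie in $K$ (cf. \pref{defggeod}). Applying \pref{contw2} with base $\ovmu=\mu_1$ and endpoints $\mu_0$, $\hat{\mu}_1$ gives
\[
W_2^2(\mu_1,\nu_t) \le (1-t)W_2^2(\mu_1,\mu_0) + t\,W_2^2(\mu_1,\hat{\mu}_1) - t(1-t)\,W_2^2(\mu_0,\hat{\mu}_1).
\]

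Next I would invoke the optimality of $\hat{\mu}_1$: since $\nu_t\in K$ for every $t$, minimality of $W_2^2(\mu_1,\cdot)$ at $\hat{\mu}_1$ forces $W_2^2(\mu_1,\hat{\mu}_1)\le W_2^2(\mu_1,\nu_t)$. Inserting this on the left of the previous display, subtracting $t\,W_2^2(\mu_1,\hat{\mu}_1)$, and dividing by $(1-t)$ for $t\in(0,1)$ yields
\[
W_2^2(\mu_1,\hat{\mu}_1) \le W_2^2(\mu_1,\mu_0) - t\,W_2^2(\mu_0,\hat{\mu}_1).
\]
Letting $t\to 1^-$ (which sharpens the coefficient to $1$) and using the symmetry of $W_2$ then produces the claimed inequality $W_2^2(\mu_0,\hat{\mu}_1)\le W_2^2(\mu_0,\mu_1)-W_2^2(\mu_1,\hat{\mu}_1)$.

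I do not expect a genuine obstacle here: the argument is short and the only points requiring care are bookkeeping ones, namely choosing $\mu_1$ (the point being projected) as the base of the geodesic rather than $\mu_0$, keeping the orientation of the endpoints straight, and observing that the hypothesis ``the geodesic remains in $K$'' is what substitutes for geodesic convexity of $K$ (which need not hold globally). The passage $t\to 1$ is harmless, since $W_2^2(\mu_0,\hat{\mu}_1)\ge 0$ makes the right-hand side monotone in $t$, so one may equally keep any fixed $t$ and optimize.
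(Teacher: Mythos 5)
Your proposal is correct and follows essentially the same route as the paper's proof: both take the generalized geodesic with base $\mu_1$ joining $\mu_0$ to $\hat{\mu}_1$, combine the minimality of $\hat{\mu}_1$ over $K$ with the strong convexity estimate \pref{contw2}, cancel the term $t\,W_2^2(\mu_1,\hat{\mu}_1)$, divide by $(1-t)$, and let $t\to 1$. The Hilbert-space projection analogy is accurate framing, and your bookkeeping (base point $\mu_1$, orientation of endpoints, the geodesic-in-$K$ hypothesis replacing convexity) matches the paper exactly.
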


\begin{proof}
Since $\mu_t\in K$ we have $W^2_2(\mu_1, \hat{\mu}_1) \le W_2^2(\mu_1, \mu_t)$, applying  \pref{contw2} to the generalized geodesics with base $\mu_1$ joining $\mu_0$ to $\hat{\mu}_1$ we thus get
\[(1-t) W_2^2(\mu_1, \hat{\mu}_1) \le (1-t) W_2^2(\mu_1, \mu_0)-t(1-t) W_2^2(\mu_0, \hat{\mu}_1),\]
dividing by $(1-t)$ and then taking $t=1$ therefore gives the desired result. 
 
 \end{proof}

 The other result we shall use to derive bounds is a $\BV$ estimate of De Philippis et al. \cite{de2016bv}, which states that, given, $\mu \in \PPa \cap \BV(\Omega)$, and $G$ : $\R_+ \to \R\cup\{+\infty\}$, proper convex l.s.c., the solution of
 \begin{equation}\label{proxG}
 \inf_{\rho\in \PPa}  \Big \{\frac{1}{2} W_2^2(\mu, \rho)+ \int_{\Omega} G(\rho(x)) \mbox{d}x  \Big\}
 \end{equation}
 is $\BV$ with the bound
 \begin{equation}
 J(\rho) \le J(\mu).
 \end{equation}
 Taking in particular, 
 \[G(\rho):=\begin{cases} 0 \mbox{ if $\rho \le M$}, \\ + \infty \mbox{ otherwise, } \end{cases}\]
 this implies that the Wasserstein projection of $\mu$ onto the set defined by the constraint $\rho \le M$ has a smaller total variation than $\mu$.

 \subsection{Maximum  principle}\label{subsec-max}

 \begin{thm}\label{unifbound}
 Let $\rho_0 \in \PPa \cap L^\infty(\Omega)$ and let $\rho_1$ be the solution of \pref{onestepjko}, then $\rho_1 \in L^{\infty} (\Omega)$  with
 \begin{equation}\label{contraclp}
 \Vert \rho_1 \Vert_{L^\infty(\Omega)} \le \Vert \rho_0 \Vert_{L^\infty(\Omega)}. 
 \end{equation}
 
 \end{thm}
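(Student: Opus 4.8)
The plan is to show that $\rho_1$ is unchanged by Wasserstein projection onto the constraint set $K:=\{\rho \in \PPa \; : \; \rho \le M \text{ a.e.}\}$, where $M:=\Vert \rho_0\Vert_{L^\infty(\Omega)}$. Since $\rho_0 \in K$ by the very definition of $M$, and $K$ is convex, establishing $\rho_1\in K$ is exactly \pref{contraclp}. Concretely, I would let $\hat\rho_1 \in \argmin_{\rho\in K} W_2^2(\rho_1, \rho)$ be a Wasserstein projection of $\rho_1$ onto $K$ (which exists by the direct method, $K$ being convex and closed) and compare the value of the JKO objective in \pref{onestepjko} at $\hat\rho_1$ with its value at the minimizer $\rho_1$.

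First I would control the total variation term. The measure $\hat\rho_1$ is precisely the solution of the projection problem \pref{proxG} with base $\mu=\rho_1$ and with $G$ equal to $0$ on $[0,M]$ and $+\infty$ beyond, so the $\BV$ estimate of De Philippis et al.\ directly gives $J(\hat\rho_1)\le J(\rho_1)$. Next I would control the Wasserstein term using Lemma \ref{distcont} with the constraint set $K$, taking $\mu_0=\rho_0\in K$, $\mu_1=\rho_1$ and $\hat\mu_1=\hat\rho_1$. The hypothesis to verify is that the generalized geodesic with base $\rho_1$ joining $\rho_0$ to $\hat\rho_1$ stays inside $K$. This is exactly where the uniform bound along generalized geodesics \pref{boundsconvex} enters: the two endpoints $\rho_0$ and $\hat\rho_1$ both satisfy the constraint $\le M$ (the first by definition of $M$, the second since $\hat\rho_1 \in K$), hence every interpolant $\mu_t$ satisfies $\Vert \mu_t \Vert_{L^\infty}\le \max(\Vert \rho_0\Vert_{L^\infty}, \Vert \hat\rho_1\Vert_{L^\infty}) \le M$ and therefore lies in $K$. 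Lemma \ref{distcont} then yields
\[W_2^2(\rho_0, \hat\rho_1) \le W_2^2(\rho_0, \rho_1) - W_2^2(\rho_1, \hat\rho_1).\]

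Finally I would conclude by optimality. Adding $\frac{1}{2\tau}$ times the displayed inequality to $J(\hat\rho_1)\le J(\rho_1)$ shows that $\hat\rho_1$ does at least as well as $\rho_1$ in \pref{onestepjko}, with a strict improvement of $\frac{1}{2\tau} W_2^2(\rho_1, \hat\rho_1)$ unless $\hat\rho_1=\rho_1$. Since $\rho_1$ is the unique minimizer of \pref{onestepjko}, this forces $W_2^2(\rho_1, \hat\rho_1)=0$, i.e.\ $\hat\rho_1=\rho_1$, and hence $\rho_1\in K$, which is \pref{contraclp}.

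I expect the crux to be the verification that the generalized geodesic with base $\rho_1$ remains in $K$, since this is the hypothesis that unlocks Lemma \ref{distcont} and produces the essential contraction of the Wasserstein term. The two separate ingredients, namely the De Philippis et al.\ $\BV$ bound for the $J$ term and the $L^\infty$-stability along generalized geodesics for the $W_2$ term, are individually available from the preceding material; the remaining work of combining them against the minimality of $\rho_1$ is then routine.
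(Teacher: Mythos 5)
Your proposal is correct and follows essentially the same route as the paper's own proof: projecting $\rho_1$ onto $K=\{\rho\in\PPa : \rho\le \Vert\rho_0\Vert_{L^\infty(\Omega)}\}$, bounding the total variation term via Theorem 1.1 of \cite{de2016bv}, obtaining the Wasserstein contraction from Lemma \ref{distcont} after checking via \pref{boundsconvex} that the generalized geodesic with base $\rho_1$ stays in $K$, and concluding by optimality of $\rho_1$. The only (harmless) difference is that you invoke uniqueness of the minimizer, whereas the strict improvement $\frac{1}{2\tau}W_2^2(\rho_1,\hat\rho_1)$ already forces $\hat\rho_1=\rho_1$ by optimality alone, exactly as in the paper.
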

 
 \begin{proof}
 Thanks to \pref{boundsconvex} the set $K:=\{\rho \in \PPa  : \;  \rho \le \Vert \rho_0\Vert_{L^\infty(\Omega)} \mbox{ a.e.} \}$ has the property that the generalized geodesics (with any base) joining two of its points remains in $K$. Let then $\hat{\rho}_1$ be the $W_2$ projection of $\rho_1$ onto $K$ i.e. the solution of $\inf_{\rho\in K} W^2_2(\rho_1, \rho)$. Thanks to Lemma \ref{distcont} we have $W^2_2(\rho_0, \hat{\rho}_1)\le W_2^2(\rho_0, \rho_1)-W_2^2(\rho_1, \hat{\rho}_1)$ and thanks to Theorem 1.1 of \cite{de2016bv}, $J(\hat{\rho}_1) \le J(\rho_1)$. The optimality of $\rho_1$ for \pref{onestepjko} therefore implies $W_2(\rho_1, \hat{\rho}_1)=0$ i.e. $\rho_1 \le  \Vert \rho_0\Vert_{L^\infty(\Omega)}$.

  \end{proof}

 \begin{rem}\label{pmaxavecent}
 In section \ref{sec-opti}, we have used an approximation of \pref{onestepjko} with an additional small entropy term, the same bound as in Theorem  \ref{unifbound} will remain valid in this case. Indeed, consider a proper convex l.s.c.  and bounded from below internal energy density $G$ and consider given $h\ge 0$, the variant of \pref{onestepjko}
 \begin{equation}\label{onestepjkowithpenalty}
 \inf_{\rho \in \PPa}  \Big\{ \frac{1}{2\tau} W_2^2(\rho_0, \rho)+ J(\rho)+h \int_{\Omega} G(\rho(x)) \mbox{d} x \Big\}.
  \end{equation}
 Then we claim that the solution $\rho_h$ still satisfies $\rho_h \le \Vert \rho_0\Vert_{L^\infty(\Omega)}$. Indeed we have seen in the previous proof that the  Wasserstein projection $\hat{\rho}_h$ of $\rho_h$ onto the constraint $\rho \le \Vert \rho_0\Vert_{L^\infty(\Omega)}$ both diminishes $J$ and the Wasserstein distance to $\rho_0$. It turns out that it also diminishes the internal energy. Indeed, thanks to Proposition 5.2 of \cite{de2016bv}, there is  a measurable set $A$ such that $\hat{\rho}_h =\chi_A \rho_h + \chi_{\Omega \setminus A} \Vert \rho_0\Vert_{L^{\infty}}$, it thus follows that
$|\Omega\setminus A| \Vert \rho_0 \Vert_{L^\infty} = \int_{\Omega\setminus A}\rho_h$. So, 
 from the convexity of $G$ and Jensen's inequality,
 $$
 \int G(\hat \rho_h) = \int_{A} G(\rho_h) + |\Omega\setminus A| G\left(  |\Omega\setminus A|^{-1}   \int_{\Omega\setminus A} \rho_h \right)  \le \int G(\rho_h),
 $$ thus yielding the same conclusion as above. 
 
 \end{rem}

 \subsection{Minimum principle in special cases}\label{sec-minprinc}

 In dimension one, it turns out that we can obtain bounds from below by the same convexity arguments as for the maximum principle of Theorem \ref{unifbound}:

 \begin{prop}\label{boundbelow1d}
 Assume that $d=1$, that  $\Omega$ is a bounded interval and that $\rho_0 \ge \alpha >0$ a.e. on $\Omega$ then the solution $\rho_1$ of \pref{onestepjko} also satifies $\rho_1 \ge \alpha >0$ a.e. on $\Omega$. 
 
 \end{prop}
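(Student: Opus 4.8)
The plan is to run the very same convexity scheme that underlies the maximum principle (Theorem \ref{unifbound}), but now with the constraint set
\[K:=\{\rho \in \PPa \; : \; \rho \ge \alpha \text{ a.e. on } \Omega\},\]
which is nonempty and contains $\rho_0$ (indeed $\alpha|\Omega| \le \int_\Omega \rho_0 = 1$). That argument rested on three ingredients: (i) $K$ is stable along generalized geodesics; (ii) the Wasserstein projection onto $K$ does not increase $J$; and (iii) optimality of $\rho_1$. Ingredients (ii) and (iii) transfer verbatim: taking $G$ to be the indicator of $[\alpha,+\infty)$ (a proper convex l.s.c.\ density), the $\BV$ estimate of De Philippis et al.\ \cite{de2016bv} applied to \pref{proxG} gives $J(\hat{\rho}_1)\le J(\rho_1)$ for the projection $\hat{\rho}_1$ of $\rho_1$ onto $K$, and the minimality of $\rho_1$ for \pref{onestepjko} is unchanged. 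The only point needing a genuinely one-dimensional argument is (i): in contrast with upper bounds, lower bounds are \emph{not} preserved by displacement interpolation in general dimension, which is exactly why the minimum principle is delicate.

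The crux is therefore the following one-dimensional computation. In dimension one the optimal transport maps are monotone nondecreasing. Fix a base $\bar\mu\in\PPa$ with density $\bar\rho$ and two measures $\mu_0,\mu_1\in K$, and let $T_0,T_1$ be the monotone optimal maps from $\bar\mu$ to $\mu_0,\mu_1$. The interpolating map $S_t:=(1-t)T_0+tT_1$ is again monotone nondecreasing, and using $T_i'(x)=\bar\rho(x)/\mu_i(T_i(x))$, a change of variables yields, for $\bar\mu$-a.e.\ $x$,
\[\mu_t(S_t(x)) = \frac{\bar\rho(x)}{S_t'(x)} = \frac{1}{\dfrac{1-t}{\mu_0(T_0(x))} + \dfrac{t}{\mu_1(T_1(x))}}.\]
Thus the interpolated density is a weighted harmonic mean of $\mu_0(T_0(x))$ and $\mu_1(T_1(x))$, hence lies between their minimum and maximum; in particular it is $\ge \min(\mu_0(T_0(x)),\mu_1(T_1(x))) \ge \alpha$. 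Therefore the generalized geodesic $(\mu_t)$ of \pref{defggeod} remains in $K$, i.e.\ $K$ is geodesically convex. (The same identity re-proves preservation of upper bounds in 1D as well.)

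With (i) established the proof closes as for Theorem \ref{unifbound}. Let $\hat{\rho}_1$ be the Wasserstein projection of $\rho_1$ onto $K$. Since $\rho_0\in K$, $\hat{\rho}_1\in K$, and the generalized geodesic with base $\rho_1$ joining them stays in $K$, Lemma \ref{distcont} gives
\[W_2^2(\rho_0,\hat{\rho}_1) \le W_2^2(\rho_0,\rho_1) - W_2^2(\rho_1,\hat{\rho}_1).\]
Combining with $J(\hat{\rho}_1)\le J(\rho_1)$ yields
\[\frac{1}{2\tau} W_2^2(\rho_0,\hat{\rho}_1) + J(\hat{\rho}_1) \le \frac{1}{2\tau} W_2^2(\rho_0,\rho_1) + J(\rho_1) - \frac{1}{2\tau} W_2^2(\rho_1,\hat{\rho}_1).\]
As $\rho_1$ minimizes \pref{onestepjko}, the left-hand side is $\ge$ its value at $\rho_1$, forcing $W_2(\rho_1,\hat{\rho}_1)=0$, that is $\rho_1=\hat{\rho}_1\in K$ and $\rho_1\ge\alpha$ a.e. The whole difficulty is concentrated in step (i): the harmonic-mean identity, and hence preservation of the lower bound, is special to one dimension (monotone maps, scalar Jacobians). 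In higher dimensions the Jacobian of $S_t$ is a determinant of a convex combination of Hessians, whose Minkowski concavity controls the density from above but not from below, so $K$ fails to be geodesically convex and this route breaks down, consistent with the minimum principle being available only in the special cases treated here.
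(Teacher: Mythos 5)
Your proposal is correct and follows essentially the same route as the paper: Wasserstein projection onto $K=\{\rho\in\PPa:\rho\ge\alpha\}$, the $\BV$ estimate of De Philippis et al., Lemma \ref{distcont}, and the one-dimensional monotone-map computation to show $K$ is stable along generalized geodesics. Your harmonic-mean identity is just an algebraic rearrangement of the paper's inequality $\ovmu=(1-t)\mu_0(T_0)T_0'+t\mu_1(T_1)T_1'\ge \alpha T_t'$, so the two arguments coincide.
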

 
 \begin{proof}
 The proof is similar to that of Theorem \ref{unifbound} but using the Wasserstein projection on the set $K:=\{ \rho \in \PPa \; : \; \rho \ge \alpha\}$, the only thing to check to be able to use Lemma \ref{distcont} is that for any basepoint $\ovmu$ and any $\mu_0$ and $\mu_1$ in $K$, the generalized geodesic with base point $\ovmu$ joining $\mu_0$ to $\mu_1$ remains in $K$. The optimal transport maps $T_0$ and $T_1$ from $\ovmu$ to $\mu_0$ and $\mu_1$ respectively are nondecreasing and continuous and setting $T_t:=(1-t)T_0+t T_1$, one has 
 \[\ovmu=\mu_t(T_t)T'_t= \mu_0(T_0)T'_0=\mu_1 T'_1=(1-t) \mu_0(T_0)T'_0+ t  \mu_1(T_1)T'_1\ge \alpha T'_t\]
 which is easily seen to imply that $\mu_t \ge \alpha$ a.e..
 \end{proof}

 As a consequence of the previous minimum principle, integrating the Euler-Lagrange equation one can deduce higher regularity for the dual variable $z$:

\begin{coro}\label{euler1d}
Assume that $d=1$ and $\Omega$ is a bounded interval. If $\rho_1$ solves \pref{onestepjko} and $z$ is as in Theorem \ref{eulertvjko} then $z\in W^{1, \infty}_0(\Omega)$. If in addition $\rho_0\ge \alpha>0$ a.e. on $\Omega$, then $z\in W^{3, \infty}(\Omega)$. 
\end{coro}

\begin{proof}
The first claim is obvious because both $\varphi$ and $\beta$ ($\varphi$, $\beta$ and $z$ are as in Theorem \ref{eulertvjko}) are bounded hence so is $z'$. As for the second one when $\rho_0\ge \alpha>0$, thanks to Proposition \ref{boundbelow1d}, we also have $\rho_1\ge \alpha$ hence $\beta=0$ in \pref{eulerjko} and in this case $\dive(z)=z'=-\frac{\varphi}{\tau}$ is Lipschitz i.e. $z\in W^{2, \infty}$. One can actually go one step further because $x-\varphi'(x)=T(x)$ where $T$ is the optimal (monotone) transport between $\rho_1$ and $\rho_0$. This map is explicit in terms of the cumulative distribution function of $\rho_1$, $F_1$, and $F_0^{-1}$ the inverse of $F_0$, the cumulative distribution function of $\rho_0$, namely $T=F_0^{-1} \circ F_1$. But $F_1$ is Lipschitz since its derivative is $\rho_1$ which is $\BV$ hence bounded and $F_0^{-1}$ is Lipschitz as well since $\rho_0\ge \alpha>0$. This gives that $\varphi \in W^{2, \infty}$ hence  $z\in W^{3, \infty}$.
\end{proof}

 The proof of  Proposition \ref{boundbelow1d} unfortunately does not generalize to higher dimensions, because densities which are bounded from below by $\alpha$ are not stable by generalized geodesics. In the radially symmetric case, we can use the Euler-Lagrange equation to derive a minimum principle. We believe that JKO steps preserve lower bounds in more general situations but have not been able to prove it.

  \begin{prop}\label{boundbelowradial}
 Assume that  $\Omega=B(0,R)$ is the ball centered at $0$  or radius $R>0$ in $\R^d$, and that  $\rho_0$ is radially symmetric with $\rho_0 \ge \alpha >0$ a.e. on $\Omega$ then the solution $\rho_1$ of \pref{onestepjko} also satifies $\rho_1 \ge \alpha >0$ a.e. on $\Omega$. 
 
 \end{prop}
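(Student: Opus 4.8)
The plan is to exploit radial symmetry to reduce the Euler--Lagrange system of Theorem \ref{eulertvjko} to a one-dimensional ODE in the radial variable, and then to run a contradiction argument at the minimum of the density, using the explicit form of the optimal transport near the edges of a minimizing plateau.

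First I would record that $\rho_1$ is itself radially symmetric: since $\Omega=B(0,R)$, both $J$ and (because $\rho_0$ is radial) $\rho\mapsto W_2^2(\rho_0,\rho)$ are invariant under rotations, so for every rotation $\Theta$ the measure $\Theta_\#\rho_1$ is again a minimizer of \pref{onestepjko}; uniqueness then forces $\Theta_\#\rho_1=\rho_1$. Writing $\rho_1(x)=u(|x|)$, $\varphi(x)=\phi(|x|)$, $z(x)=\zeta(|x|)\tfrac{x}{|x|}$ and keeping $\beta$ for the (now radial) multiplier, the conditions \pref{eulerjko}--\pref{slackjko} read, for $r\in(0,R)$,
\begin{equation*}
\frac{\phi(r)}{\tau}+\zeta'(r)+\frac{d-1}{r}\zeta(r)=\beta(r),\quad |\zeta|\le 1,\quad \zeta(R)=0,\quad \beta\ge0,\quad \beta u=0,
\end{equation*}
together with $\zeta=-\mathrm{sign}(u')$ on $\{u'\neq0\}$ (the radial reading of $z\cdot D\rho_1=-|D\rho_1|$). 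The first key remark is that, since $\dive(z)\in L^\infty$ by Theorem \ref{eulertvjko}, the function $r\mapsto r^{d-1}\zeta(r)$ is locally Lipschitz, hence $\zeta$ is \emph{continuous} on $(0,R]$. As $\zeta$ equals $+1$ where $u$ decreases and $-1$ where $u$ increases, continuity rules out strict interior extrema of $u$: every local minimum of $u$ is attained on a plateau where $u$ is constant.

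Second, I would compute the transport near the edges of a minimizing plateau. Suppose $[a,b]\subset[0,R]$ is a maximal plateau on which $u\equiv m$ is a global minimum; then $u$ is strictly decreasing on a left-neighborhood of $a$ (so $\zeta\equiv1$) and strictly increasing on a right-neighborhood of $b$ (so $\zeta\equiv-1$). On these one-sided neighborhoods $\beta=0$ and $\zeta$ is constant, so the ODE gives $\phi=\mp\tau(d-1)/r$ and, since the radial optimal map is $t(r)=r-\phi'(r)$,
\begin{equation*}
t(r)=r-\frac{\tau(d-1)}{r^2}<r\ \ (r\to a^-),\qquad t(r)=r+\frac{\tau(d-1)}{r^2}>r\ \ (r\to b^+).
\end{equation*}
Because $t=F_0^{-1}\circ F_1$ is the monotone rearrangement between the radial cumulative masses $F_i(r)=\int_{B_r}\rho_i$, and is continuous ($\rho_0\ge\alpha>0$ makes $F_0$ strictly increasing), one concludes $t(a)<a<b<t(b)$.

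Finally, I would close with a mass balance on the plateau. Assuming $m<\alpha$, conservation of mass through $T$ gives $F_1(b)-F_1(a)=F_0(t(b))-F_0(t(a))$; the left-hand side equals $m(|B_b|-|B_a|)$ since $u\equiv m$ on $[a,b]$, while the right-hand side is $\int_{B_{t(b)}\setminus B_{t(a)}}\rho_0\ge\alpha(|B_{t(b)}|-|B_{t(a)}|)$. Since $t(a)<a<b<t(b)$ gives $|B_{t(b)}|-|B_{t(a)}|>|B_b|-|B_a|>0$, this forces $m>\alpha$, a contradiction. The center plateau ($a=0$, $t(0)=0$) and the boundary plateau ($b=R$, where $t(R)=R$ because $F_1(R)=F_0(R)=1$) are identical, and the degenerate case $m=0$ is even simpler: $F_1(b)=F_1(a)$ would force $t(a)=t(b)$, contradicting $t(a)<t(b)$. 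The main obstacle, and the precise place where radiality is used, is the passage from the abstract multiplier system to the pointwise identities $t(a^-)<a<b<t(b^+)$: it relies on $u$ being genuinely monotone near a plateau edge so that $\zeta$ is locally constant and equal to $\pm1$ — a one-dimensional structural fact with no clean higher-dimensional analogue, which is exactly why the generalized-geodesic projection argument of Proposition \ref{boundbelow1d} must be abandoned here.
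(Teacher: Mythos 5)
Your overall strategy is the paper's: reduce to the radial variable, use that $r^{d-1}\tilz$ is Lipschitz (hence $\tilz$ continuous) together with the sign identification $\tilz=+1$ on $\spt(\nu^-)$, $\tilz=-1$ on $\spt(\nu^+)$, and conclude by a mass balance through the monotone map $\tilT=F_{\tilm_0}^{-1}\circ F_{\tilm_1}$ once the edge inequalities $\tilT(a)\le a$, $\tilT(b)\ge b$ are known. But there is a genuine gap at precisely the decisive step. Your claim that at the edges of a maximal minimizing plateau $u$ is \emph{strictly monotone} on one-sided neighbourhoods, so that $\zeta\equiv\pm1$ there, does not follow from maximality, and your whole computation rests on it: you integrate the ODE with $\zeta$ constant to get $\phi(r)=\mp\tau(d-1)/r$ and hence the explicit displacement $t(r)=r\mp\tau(d-1)/r^2$. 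Continuity of $\zeta$ does exclude a V-shaped minimum and oscillations of $u$ accumulating at the edge (either would force $\zeta(a)=+1$ and $\zeta(a)=-1$ simultaneously), but it does not exclude, for instance, $u\equiv M'>m$ on $(a-\eps,a)$ followed by a downward jump into the plateau at $a$: then $\nu^-$ is an atom at $a$, so $\zeta(a)=1$, yet $Du=0$ on $(a-\eps,a)$ pins nothing there, $\zeta$ need not be constant on any left-neighbourhood, and your formula for $\phi$ --- hence $t(a)<a$ --- is unjustified. Two smaller loose ends of the same kind: you assert rather than prove that the essential infimum is attained on a nondegenerate plateau $[a,b]$, and you posit the radial form $z=\zeta(|x|)x/|x|$ of the multiplier system, which needs an argument (e.g.\ averaging $z$ over rotations, or, as the paper does, re-deriving the Euler--Lagrange equation for the reduced one-dimensional radial functional $\Fr$).

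The paper's proof shows how to close exactly this gap: it never uses constancy of $\tilz$ near the edge, only the single pointwise fact $\tilz(a_-)=1=\max\tilz$ (from $a_-\in\spt(\nu^-)$). On a left-neighbourhood $I_-$ it needs only $\tilr_1>0$ --- guaranteed because, by minimality of $a_-$, the left limit of $\tilr_1$ at $a_-$ is $\ge\alpha$ --- so that $\tilb=0$ and the ODE $\tilz'+\frac{d-1}{r}\tilz+\frac{\tilfi}{\tau}=0$ holds with $\tilz$ unknown; differentiating, using $\tilfi'(r)=r-\tilT(r)$, and exploiting the first- and second-order conditions at the maximum of $\tilz$ ($\tilz'(a_-)=0$, obtained by comparing one-sided difference quotients, and $\tilz''(a_-^-)\le0$) yields $\tilT(a_-)-a_-=\tau\bigl(\tilz''(a_-^-)-(d-1)/a_-^2\bigr)\le0$ with no explicit solution of the ODE. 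Moreover, the paper sidesteps the plateau-existence question altogether: it picks a continuity point $r_0$ with $\tilr_1(r_0)<\alpha$ and defines $a_\pm$ from the sublevel set $\{\tilr_1\le\alpha\}$, and the mass balance closes with the \emph{non-strict} inequalities $\tilT(a_-)\le a_-$, $\tilT(a_+)\ge a_+$, strictness in the final contradiction coming from $\tilr_1<\alpha$ near $r_0$ --- just as in your own balance $m<\alpha$ already suffices without strict displacement. Your mass-balance step is fine; it is the derivation of the edge inequalities for the transport that must be replaced by this maximum-principle-type argument.
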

 
 \begin{proof}

 Let us write $\rho_0(x)=\tilr_0(r)$ with $r=\vert x \vert\in [0,R]$, since \pref{onestepjko} is invariant by rotation and strictly convex, it is easy to see that its unique solution $\rho_1$ is also radially symmetric, let us write it as  $\rho_1(x)=\tilr_1(r)$. Denoting by $c_d$ the $(d-1)$-Hausdorff measure of the unit sphere $S^{d-1}$,  and setting $\tilm_0:=c_d r^{d-1} \tilr_0$, $\tilm_1:=c_d r^{d-1} \tilr_1$,  observe that $\tilr_1$ is the minimizer of the one-dimensional convex functional
 \[\Fr(\tilr):=\frac{1}{2\tau} W_2^2(\tilm_0, c_d r^{d-1} \tilr)+c_d \int_0^R r^{d-1} \vert D \tilr \vert\]
 among nonnegative densities  $\tilr$ on $(0,R)$ such that $c_d \int_0^R r^{d-1} \tilr=1$ and $r^{d-1} D \tilr$ is a bounded Radon measure on $(0,R)$. Arguing as in the proof of Theorem \ref{eulertvjko}, the minimizer $\tilr_1$ is characterized by the Euler-Lagrange equation
 \begin{equation}\label{elrad}
 (\tilz r^{d-1})' + \frac{\tilfi}{\tau} r^{d-1}=\tilb \ge 0, \; \tilb \in L^{\infty}(0, R), \; \tilb \tilr_1=0,
  \end{equation}
 where $\tilfi$ is a Kantorovich potential between $\tilm_1$ and $\tilm_0$ and $\tilz  \in L^{\infty}(0, R)$ is such that
 \begin{equation}\label{elrad2}
 \vert \tilz\vert \le 1 \mbox{ a.e. and  } \int_0^R r^{d-1} \vert D \tilr_1 \vert=\int_0^R (\tilz r^{d-1})' \tilr_1.  
 \end{equation}
 Note that \pref{elrad} implies that $r^{d-1}\tilz$ is Lipschitz so that $\tilz$ is locally Lipschitz and 
 \begin{equation}\label{elrad3}
 \int_0^R r^{d-1} \vert D \tilr_1 \vert=-\int_0^R  r^{d-1} \tilz D \tilr_1
 \end{equation}
  Since $\tilr_1\in \BV_{\rm{loc}}(0,R)$,  we can perform a Hahn-Jordan decomposition of  $D\tilr_1$:
\begin{equation}\label{jordan}
D\tilr_1=\nu^+-\nu^-, \; \nu^+\ge 0, \; \nu^{-}\ge 0, \; \nu^+ \perp \nu^-,
\end{equation}
and set 
\begin{equation}\label{defapm}
A:=\spt(\vert D\tilr_1 \vert)=A^+\cup A^- \mbox{ with } A^+:=\spt(\nu^+), \; A^-:=\spt(\nu^-).
\end{equation}
Next, we observe that, using \pref{elrad3},  we have $\vert D \tilr_1\vert=\nu^+ +\nu^-= -\tilz(\nu^+-\nu^-)$, we thus deduce  that $\tilz=-1=\min \tilz $ $\nu^+$-a.e and since $\tilz$ is continuous we  actually have $\tilz=-1$ on $A^+=\spt(\nu^+)$. In a similar way, $\tilz=1=\max \tilz $ on $A^-:=\spt(\nu^-)$. 

\smallskip

Now let us show that $\tilr_1\ge \alpha$.  Assume, by contradiction, that the set where $\tilr_1<\alpha$ has positive measure in $(0,R)$, and let $r_0\in (0,R)$ be a continuity point of $\tilr_1$ such that $\tilr_1(r_0)<\alpha$, define then 
\[\begin{split}
a_-:= \inf\{ r\in (0, r_0) \; : \; \tilr_1 \le \alpha \mbox{ on $[r, r_0]$}\},\\
 a_+:=  \sup\{ r\in (r_0, R) \; : \; \tilr_1 \le \alpha \mbox{ on $[r_0, r]$}\}.
 \end{split}
 \]
We then have $0\le a_-<a_+ \le R$. Let us assume that  $a_->0$, we claim then that $a_-\in A^{-}$ since otherwise, $\tilr_1$ would be nondecreasing in a neighbourhood of $a_-$ which would imply $\tilr_1(a_--\eps)\le \alpha$ for small $\eps>0$, contradicting the definition of $a_-$, we thus have $\tilz(a_-)=1$. Since $\tilr_1$ is $\BV$ in a neigbourhood of $a_-$, it has a right and a left limit at $a_-$, again by minimality of $a_-$, the left limit of $\tilr_1$ at $a_-$ cannot be strictly smaller than $\alpha$, so there is an $\eps>0$ such that $\tilr_1>0$ on $I_-:=[a_- -\eps, a_-)$. Hence on $I_-$, \pref{elrad} becomes
\begin{equation}\label{elrad4}
\tilz '+\frac{d-1}{r} \tilz + \frac{\tilfi}{\tau}=0, 
\end{equation}
moreover, on $I_-$, $\tilfi$ is actually of class $C^1$ with $\tilfi'(r)=r-\tilT(r)$ where  $\tilT$ is the  (continuous) optimal transport between $\tilm_1$ and $\tilm_0$  obtained by the relation $F_{\tilm_0} \circ \tilT=F_{\tilm_1}$ (where $F_{\tilm_i}$ is the cumulative distribution function of $\tilm_i$ for $i=0, 1$). One can therefore differentiate \pref{elrad4} on $I_-$ so as to obtain
\begin{equation}\label{elrad5}
\tilz''+\frac{d-1}{r} \tilz '-\frac{(d-1)}{r^2} \tilz(r) + \frac{r-\tilT(r)}{\tau}=0, \forall r\in I_-.
\end{equation}
Since $\tilz$ is maximal at $a_-$, we first have
\[\lim_{\delta\to 0^+} \delta^{-1} [\tilz(a_-)  -\tilz(a_-  -\delta)]= -\frac{(d-1)\tilz(a_-)}{a_-}-\frac{\tilfi(a_-)}{\tau} \ge 0\]
but recalling \pref{elrad} we also have 
\[\begin{split}
0 &\ge \limsup_{\delta\to 0^+} \delta^{-1} [\tilz(a_-+\delta)-\tilz(a_-)]\\
&\ge   \lim_{\delta\to 0^+}  \delta^{-1} \int_{a_-}^{a_-+\delta} [-(d-1) s^{-1} \tilz(s)-\tau^{-1} \tilfi(s)] \mbox{d}s \\
&= -\frac{(d-1)\tilz(a_-)}{a_-}-\frac{\tilfi(a_-)}{\tau}
\end{split}\]
which shows that $\tilz$ is differentiable at $a_-$ with $\tilz '(a_-)=0$, this enables us to deduce that $\tilz''(a_-^-):=\lim_{\delta \to 0^+}  \tilz''(a_--\delta)\le 0$, with \pref{elrad5} this gives 
\[\tilT (a_-)-a_-=\tau \Big( \tilz''(a_-^-)-\frac{(d-1)}{a_-^2}\Big)  \le 0.\]
If $a_-=0$, since $\tilT(0)=0$, the same conclusion is reached with an equality. In a similar way, we obtain $\tilT(a_+)\ge a_+$ (again with an equality in case $a_+=R$). Using the fact that $\tilr_1 \le \alpha$ on $(a_-, a_+)$ (with strict inequality in a neighbourhood of $r_0$) together with $F_{\tilm_0} \circ \tilT=F_{\tilm_1}$ and $\tilr_0\ge \alpha$, we get
\[\begin{split}
\alpha c_d\frac{(a_+^d-a_-^d)}{d} &> F_{\tilm_1}(a_+)-F_{\tilm_1}(a_-) = F_{\tilm_0}( \tilT(a_+))- F_{\tilm_0}( \tilT(a_-))\\
& \ge  F_{\tilm_0}( a_+)-  F_{\tilm_0}( a_-) \ge \alpha c_d\frac{(a_+^d-a_-^d)}{d}
\end{split}\]
which yields the desired contradiction.

 \end{proof}
 
 Let us remark that the proof of Proposition \ref{boundbelowradial} gives an alternative proof of the minimum principle in dimension one.

\section{Convergence of the TV-JKO scheme under a lower bound estimate}\label{sec-conv} 
 
We are now interested in the convergence of the TV-JKO scheme to a solution of the fourth-order nonlinear equation \pref{pde4} as the time step $\tau$ goes to $0$. Throughout this section, we assume that $\Omega$ is a bounded open convex subset of $\R^d$ and that the initial condition $\rho_0$ satisfies 
\begin{equation}\label{boundalpha}
\rho_0 \in \PPa \cap BV(\Omega)\cap L^{\infty}(\Omega), \; \rho_0 \ge \alpha >0 \mbox{ a.e. on $\Omega$}.
\end{equation}
We fix a time horizon $T$, and for small $\tau>0$, define the sequence $\rho_k^\tau$ by
\begin{equation}\label{jkotv1}
\rho_0^\tau=\rho_0, \; \rho_{k+1}^\tau \in \argmin  \Big\{ \frac{1}{2\tau} W_2^2(\rho_{k}^\tau, \rho)+ J(\rho), \; \rho \in \BV\cap \PPa\Big\}
\end{equation}
for $k=0, \ldots N_\tau$ with $N_\tau:=[\frac{T}{\tau}]$.  Thanks to Theorem \ref{unifbound}, \pref{boundalpha} ensures that the JKO-iterates $\rho_k^{\tau}$ defined by \pref{jkotv1} also remain bounded $\rho_k^{\tau} \le \Vert \rho_0\Vert_{L^\infty(\Omega)}$.  We shall also assume that $\rho_k^\tau$ remains bounded from below by $\alpha$:
\begin{equation}\label{extraassumption}
\rho_k^\tau \ge \alpha>0 \mbox{ a.e. in $\Omega$, for every $k$ and $\tau$}
\end{equation}
which holds, as we have seen in subsection \ref{sec-minprinc} when $d=1$ or when $\Omega$ is a ball and $\rho_0$ is radially symmetric.

\smallskip

We extend this discrete sequence by piecewise constant interpolation i.e. 
\begin{equation}\label{extend}
\rho^{\tau}(t,x)=\rho_{k+1}^\tau(x), \; t \in (k\tau, (k+1)\tau], \; k=0, \ldots N_\tau, \; x\in \Omega.
\end{equation}

We shall see that $\rho^\tau$ converges  to a solution $\rho$ of
\begin{equation}\label{pde41}
\partial_t \rho + \dive \Big(\rho \;  \nabla  \dive  \Big( \frac{\nabla \rho}{\vert \nabla \rho \vert} \Big)  \Big)       =0, \; (t,x)\in (0,T)\times \Omega, \; \rho_{\vert_{t=0}}=\rho_0, 
\end{equation}
with the no-flux boundary condition
\begin{equation}\label{no-flux}
\rho \;  \nabla  \dive  \Big( \frac{\nabla \rho}{\vert \nabla \rho \vert} \Big) \cdot \nu=0 , \; \mbox{ on }  (0,T)\times \partial \Omega. 
\end{equation}
Let us introduce the spaces 
\[\begin{split}
H^1_{\dive}(\Omega):=\{z\in L^2(\Omega, \R^d) \; : \; \dive(z)\in L^2(\Omega)\},\\ 
H^2_{\dive}(\Omega):= \{z\in H^1_{\dive}(\Omega)  \; : \; \dive(z)\in H^1(\Omega)\}
\end{split}\]

Since $\rho$ is no more than $\BV$ in $x$, one has to be slightly cautious in the meaning of  $ \dive( \frac{\nabla \rho}{\vert \nabla \rho \vert}) $ which be conveniently done by interpreting this term as the negative of an element  in the subdifferential of $J$ (in the $L^2$ sense). For every $\rho \in \BV(\Omega)\cap L^2(\Omega)$ let us define
\[\partial J(\rho):=\{\dive(z) \; : \; z\in H^1_{\dive}(\Omega), \; \Vert z \Vert_{L^{\infty}} \le 1, \; z\cdot \nu=0 \mbox{ on $\partial \Omega$}, \;  J(\rho)=\int_{\Omega} \dive(z) \rho\}.\]
This leads to the following definition:
\begin{defi}
A weak solution of \pref{pde41}-\pref{no-flux} is a $\rho \in L^{\infty}((0,T), \BV(\Omega)\cap L^{\infty}(\Omega))\cap C^{0}([0,T], (\PP(\Omb), W_2)) $ such that there exists $z\in L^{\infty}((0,T)\times \Omega)\cap L^2((0,T), H^2_{\dive}(\Omega))$ with  
\begin{equation}\label{condcalibr}
\dive(z(t,.))\in \partial J(\rho(t,.))  \mbox{ for a.e. $t\in (0,T)$},
\end{equation}
and $\rho$ is a weak solution of
\begin{equation}\label{transportzxx}
\partial_t \rho -\dive \Big(\rho   \nabla \dive(z)\Big)   =0, \;  \rho_{\vert_{t=0}}=\rho_0, \; \rho \nabla \dive(z)\cdot \nu=0  \mbox{ on }  (0,T)\times \partial \Omega.
\end{equation}
i.e. for every $u \in C_c^{\infty}([0,T)\times \Omb)$ 
\[\int_0^T \int_\Omega (\partial_t  u \;  \rho-\rho \nabla \dive(z) \cdot \nabla u) \mbox{d}x \mbox{d}t=-\int_\Omega u(0,x) \rho_0(x) \mbox{d} x. \]

\end{defi}

We then have

\begin{thm}
If $\rho_0$ satisfies \pref{no-flux} and the JKO iterates $\rho_k^\tau$ obey the lower bound \pref{extraassumption}, there exists a vanishing sequence of time steps $\tau_n \to 0$ such that the sequence $\rho^{\tau_n}$ constructed by \pref{jkotv1}-\pref{extend} converges strongly in $L^p((0,T)\times (0,1))$ for any $p\in [1, +\infty)$ and in $L^{\infty}((0,T), (\PP(\Omb), W_2))$ to  a weak solution of \pref{pde41}-\pref{no-flux}.
\end{thm}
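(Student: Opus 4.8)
The plan is to follow the classical JKO-to-PDE route (a priori estimates, compactness, passage to the limit in the weak continuity equation), the one genuinely non-standard ingredient being the exploitation of the lower bound \pref{extraassumption} to turn degenerate weighted estimates into clean $L^2$ bounds.

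\textbf{Step 1: a priori estimates.} Comparing $\rho_{k+1}^\tau$ with the competitor $\rho_k^\tau$ in \pref{jkotv1} gives the one-step inequality $\frac{1}{2\tau}W_2^2(\rho_k^\tau,\rho_{k+1}^\tau)+J(\rho_{k+1}^\tau)\le J(\rho_k^\tau)$; summing over $k$ yields that $k\mapsto J(\rho_k^\tau)$ is non-increasing (hence $J(\rho_k^\tau)\le J(\rho_0)$, a uniform $\BV$ bound) together with $\sum_k W_2^2(\rho_k^\tau,\rho_{k+1}^\tau)\le 2\tau J(\rho_0)$. The maximum principle (Theorem \ref{unifbound}) gives the uniform bound $\rho_k^\tau\le\Vert\rho_0\Vert_{L^\infty}$. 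The crucial point is to feed in the Euler-Lagrange equation of Theorem \ref{eulertvjko} at each step: since \pref{extraassumption} forces $\rho_{k+1}^\tau\ge\alpha>0$ a.e., the multiplier $\beta$ in \pref{eulerjko}-\pref{slackjko} vanishes, so that $\dive(z_{k+1}^\tau)=-\varphi_{k+1}^\tau/\tau$ and, setting $v_{k+1}^\tau:=\nabla\dive(z_{k+1}^\tau)=-\nabla\varphi_{k+1}^\tau/\tau$, one has $\int_\Omega\rho_{k+1}^\tau|v_{k+1}^\tau|^2=\tau^{-2}W_2^2(\rho_k^\tau,\rho_{k+1}^\tau)$. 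Interpolating in time as in \pref{extend}, this gives the weighted bound $\int_0^T\int_\Omega\rho^\tau|v^\tau|^2\le 2J(\rho_0)$. \emph{Here the lower bound is essential}: dividing by $\alpha$ upgrades this to the unweighted estimate $\int_0^T\int_\Omega|\nabla\dive(z^\tau)|^2\le 2J(\rho_0)/\alpha$, while $\Vert z^\tau\Vert_{L^\infty}\le1$ and the Poincar\'e-Wirtinger inequality (noting that $\dive(z^\tau)=-\varphi^\tau/\tau$ has zero mean, since any $\dive(z)$ with $z\cdot\nu=0$ does) bound $\dive(z^\tau)$ in $L^2((0,T),H^1(\Omega))$. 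The momentum $m^\tau:=\rho^\tau v^\tau$ is then bounded in $L^2((0,T)\times\Omega)$ because $|m^\tau|^2\le\Vert\rho_0\Vert_{L^\infty}\rho^\tau|v^\tau|^2$.

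\textbf{Step 2: compactness.} The uniform spatial $\BV$ bound together with the $W_2$-equicontinuity in time coming from Step 1 (for $s<t$, $W_2(\rho^\tau(s),\rho^\tau(t))\le\sqrt{2J(\rho_0)}\sqrt{|t-s|+\tau}$ by Cauchy-Schwarz) allow a by-now standard refined Aubin-Lions argument (as in the JKO literature) to extract $\tau_n\to0$ with $\rho^{\tau_n}\to\rho$ strongly in $L^1((0,T)\times\Omega)$; the $L^\infty$ bound upgrades this to strong $L^p$ convergence for every $p\in[1,+\infty)$ by dominated convergence. The same equicontinuity and the compactness of $(\PP(\Omb),W_2)$ give, via Arzel\`a-Ascoli, $\rho\in C^0([0,T],(\PP(\Omb),W_2))$, convergence in $L^\infty((0,T),(\PP(\Omb),W_2))$, and $\rho(0)=\rho_0$. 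Along the same subsequence, $z^{\tau_n}\rightharpoonup z$ weakly-$*$ in $L^\infty$ with $\Vert z\Vert_{L^\infty}\le1$, $\dive(z^{\tau_n})\rightharpoonup\dive(z)$ weakly in $L^2((0,T),H^1)$ (so $z\in L^2((0,T),H^2_{\dive}(\Omega))$), the condition $z\cdot\nu=0$ passing to the limit through its weak divergence formulation; and $m^{\tau_n}\rightharpoonup m$ weakly in $L^2$. Finally, weak-strong convergence of the product of $\nabla\dive(z^{\tau_n})\rightharpoonup\nabla\dive(z)$ (weak $L^2$) with $\rho^{\tau_n}\to\rho$ (strong $L^2$) identifies $m=\rho\,\nabla\dive(z)$.

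\textbf{Step 3: the limiting continuity equation.} For $u\in C_c^\infty([0,T)\times\Omb)$, use $\rho_k^\tau=(\id-\nabla\varphi_{k+1}^\tau)_\#\rho_{k+1}^\tau$ and a second-order Taylor expansion of $u$ to write $\int_\Omega(\rho_{k+1}^\tau-\rho_k^\tau)u=-\tau\int_\Omega m_{k+1}^\tau\cdot\nabla u+R_k$, with $|R_k|\le\frac12\Vert D^2u\Vert_{L^\infty}W_2^2(\rho_k^\tau,\rho_{k+1}^\tau)$; the remainders sum to $O(\tau)\to0$ by Step 1. Summing over $k$ (an Abel summation reconstructs the $\partial_tu$ term and the initial datum) and passing to the limit using Step 2 yields the weak formulation of \pref{transportzxx}, i.e. for every such $u$,
\[\int_0^T\int_\Omega(\partial_tu\,\rho-\rho\,\nabla\dive(z)\cdot\nabla u)\,\mathrm{d}x\,\mathrm{d}t=-\int_\Omega u(0,x)\rho_0(x)\,\mathrm{d}x,\]
the fact that $u$ need not vanish on $\partial\Omega$ encoding the no-flux condition \pref{no-flux}.

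\textbf{Step 4: the subdifferential inclusion (the main obstacle).} It remains to prove \pref{condcalibr}, namely $J(\rho(t))=\int_\Omega\dive(z(t))\rho(t)$ for a.e. $t$. The support-function identity \pref{tvsupportf} gives the pointwise inequality $\int_\Omega\dive(z(t))\rho(t)\le J(\rho(t))$; lower semicontinuity of $\rho\mapsto\int_0^T J(\rho(t,\cdot))\,\mathrm{d}t$ under $L^1$ space-time convergence gives $\int_0^T J(\rho)\le\liminf_n\int_0^T J(\rho^{\tau_n})$. By the calibration identity of Theorem \ref{eulertvjko} on each slice, $\int_0^T J(\rho^{\tau_n})=\int_0^T\int_\Omega\dive(z^{\tau_n})\rho^{\tau_n}$, and the right-hand side converges (full limit) to $\int_0^T\int_\Omega\dive(z)\rho$ by weak-strong convergence. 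Chaining,
\[\int_0^T J(\rho)\le\int_0^T\int_\Omega\dive(z)\rho\le\int_0^T J(\rho),\]
so all inequalities are equalities; the integrand $J(\rho(t))-\int_\Omega\dive(z(t))\rho(t)$ being nonnegative, it vanishes for a.e. $t$, which is \pref{condcalibr}. I expect this last step, together with the extraction of a clean weak $L^2$ limit for $\nabla\dive(z^{\tau_n})$, to be the delicate heart of the argument, and it is precisely here that the lower bound \pref{extraassumption} is indispensable: without it the velocity is controlled only in the degenerate space weighted by $\rho^\tau$, and neither the weak compactness of $\nabla\dive(z^\tau)$ nor the weak-strong identification of the momentum and of the subdifferential would be available.
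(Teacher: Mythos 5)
Your proposal is correct and follows essentially the same route as the paper's proof: the one-step energy estimate, the vanishing of the multiplier $\beta$ under \pref{extraassumption} so that $\dive(z_{k+1}^\tau)=-\varphi_{k+1}^\tau/\tau$ and the lower bound $\rho^\tau\ge\alpha$ upgrades the weighted velocity estimate to the $L^2((0,T),H^1)$ bound on $\dive(z^\tau)$ via Poincar\'e--Wirtinger, the refined Aubin--Lions/Arzel\`a--Ascoli compactness, the Taylor-expansion passage to the limit in the weak continuity equation, and the identification of $\dive(z(t,\cdot))\in\partial J(\rho(t,\cdot))$ by chaining lower semicontinuity of $J$, the per-step calibration identity, and weak-strong convergence of $\rho^{\tau_n}\dive(z^{\tau_n})$. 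The only cosmetic difference is that you phrase the momentum convergence in $L^2$ where the paper uses weak $L^1$ convergence of $\rho^\tau\nabla\dive(z^\tau)$, which changes nothing of substance.
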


\begin{proof}
First, $\rho_0$ being $L^\infty$, we have a uniform $L^\infty$ bound on $\rho^\tau$ thanks to Theorem \ref{unifbound}, and from our extra lower bound assumption \pref{extraassumption} we have
\begin{equation}\label{unifbounds}
M:=\Vert \rho_0\Vert_{L^{\infty}} \ge    \rho^\tau(t,x) \ge \alpha, \; t\in [0,T], \; \mbox{ a.e. } x\in \Omega.
 \end{equation}

Moreover, by construction of the TV-JKO scheme \pref{jkotv1}, one has
\begin{equation}\label{estim0}
\frac{1}{2\tau} \sum_{k=0}^{N_\tau}  W_2^2(\rho_k^\tau, \rho_{k+1}^\tau) \le J(\rho_0), \;   \;  \sup_{t\in [0,T]} J(\rho^\tau(t,.)) \le J(\rho_0)
\end{equation}

 By using an Aubin-Lions type compactness Theorem of Savar\'e and Rossi (Theorem 2 in \cite{rossi2003tightness}), the fact that the embedding of $\BV(\Omega)$ into $L^p(\Omega)$ is compact for every $p\in [1, \frac{d}{d-1})$ as well as a refinement of Arz\`ela-Ascoli Theorem (Proposition 3.3.1 in \cite{ambrosio2008gradient}), one obtains (see  section 4 of \cite{DiF2014curves}  or section 5 of \cite{carlier2017split} for details) that, up to taking suitable sequence of vanishing times steps  $\tau_n\to 0$, we may assume that 
 \begin{equation}\label{cvrhotLp}
 \rho^\tau \to \rho \mbox{ a.e.  in $(0,T)\times \Omega$  and in } L^p((0,T)\times \Omega), \; \forall p\in [1, \frac{d}{d-1})
 \end{equation} 
 and 
 \begin{equation}\label{cvunifw}
\sup_{t\in [0,T]}  W_2(\rho^\tau(t,.), \rho(t, .)) \to 0 \mbox{ as } \tau \to 0, 
 \end{equation}
 for some limit curve $\rho \in C^{0,\frac{1}{2}} ([0,T], (\PP(\Omb), W_2)) \cap L^q((0,T)\times \Omega)$. From \pref{unifbounds} and Lebesgue's dominated convergence Theorem, we deduce that the convergence in \pref{cvrhotLp} actually holds for any $p\in [1, +\infty)$. It also follows from \pref{unifbounds} and \pref{estim0}, that $\rho \in L^{\infty}((0,T), \BV(\Omega)\cap L^{\infty}(\Omega))$ and that $\rho \ge \alpha$. 
 
  \smallskip
  
 We deduce from the fact that $\rho_k^\tau\ge \alpha>0$ and Theorem \ref{eulertvjko} that for each $k=0, \ldots, N_\tau$,  there exists $z_k^\tau  \in L^{\infty}(\Omega, \R^d)$ such that $\dive(z_k^\tau)  \in W^{1, \infty}(\Omega)$ and 
 \begin{equation}\label{eulerk1}
 \Vert z_k^\tau\Vert_{L^\infty} \le 1, \; z_k^\tau\cdot \nu=0 \mbox{ on $\partial \Omega$}, \; J(\rho_{k}^\tau)=\int_\Omega \dive (z_k^\tau) \rho_k^\tau, 
 \end{equation}
  and the optimal (backward) optimal transport $T_{k+1}^\tau$ from $\rho_{k+1}^\tau$ to $\rho_k^\tau$ is related to $z_{k+1}^\tau$ by
  \begin{equation}\label{eulerk2}
  \id-T_{k+1}^\tau=-\tau \nabla \dive ( z_{k+1}^\tau).
  \end{equation}
We extend $z_k^\tau$ in a piecewise constant way i.e. set
\begin{equation}\label{extendz}
z^\tau(t,x)=z_{k+1}^\tau(x), \; t\in (k\tau, (k+1)\tau], \, k=0, \ldots, N_\tau, \; x\in \Omega. 
\end{equation}
We then observe that 
\[\begin{split}
W_2^2(\rho_k^\tau, \rho_{k+1}^\tau) &=\int_\Omega \vert x-T_{k+1}^\tau(x)\vert^2 \rho_{k+1}^\tau(x) \mbox{d} x\\
&=\tau^2 \int_\Omega \vert \nabla \dive( z_{k+1}^\tau)\vert ^2 \rho_{k+1}^\tau(x) \mbox{d} x\\
 &\ge \alpha \tau^2 \int_\Omega \vert \nabla \dive( z_{k+1}^\tau)\vert ^2 \mbox{d} x
\end{split}\]
Thanks to \pref{estim0} we thus deduce that $\nabla \dive z^\tau$ is bounded in $L^2((0,T)\times \Omega)$, since $\dive(z^\tau)$ has zero-mean, with Poincar\'e-Wirtinger inequality, we obtain
\begin{equation}\label{h2bound}
 \int_0^T  \Vert \dive(z^\tau) \Vert^2_{H^1(\Omega)} \mbox{d}t \le C.
\end{equation}
We may therefore assume (up to further suitable extractions) that there is some $z\in L^{\infty}((0,T)\times \Omega)\cap L^2((0,T), H^2_{\dive}(\Omega))$ such that $z^\tau$ converges to $z$ weakly $*$ in $L^\infty((0,T)\times \Omega)$ and  $(\dive(z^\tau), \nabla \dive(z^\tau))$ converges  weakly in $L^2((0,T)\times \Omega)$ to $(\dive(z), \nabla \dive(z))$. Of course $\Vert z \Vert_{L^\infty} \le 1$ and $z(t,.) \cdot \nu=0$ on $\partial \Omega$ for a.e. $t$. Note also that $\rho^\tau \nabla \dive( z^\tau)$ converges weakly in $L^1((0,T)\times \Omega )$ to $\rho  \nabla \dive (z)$.

\smallskip

The limiting equation can now be derived using standard computations (see the proof of Theorem 5.1 of the seminal work \cite{jordan1998variational}, or chapter 8 of \cite{santambrogio2015optimal}):
Let $u\in C_c^2([0,T)\times \Omb)$ and observe that
\begin{equation*}
\int_0^T \int_\Omega \partial_t  u \; \rho^\tau \mathrm{d}x\mathrm{d}t = \sum_{k=1}^{N_\tau} \left( \int_\Omega u(k\tau,x) (\rho_k^\tau(x) -  \rho_{k+1}^\tau(x))\mathrm{d}x \right)- \int_\Omega u(0,x) \rho_1^\tau(x) \mathrm{d}x.
\end{equation*}
Recalling that $\rho_k^\tau = {T_{k+1}^\tau}_\# \rho_{k+1}^\tau$, and applying Taylor's theorem, we have 
\begin{align*}
 &\sum_{k=1}^{N_\tau } \left( \int_\Omega u(k\tau,x) (\rho_k^\tau(x) - \rho_{k+1}^\tau(x))\mathrm{d}x \right)\\
 &=
\sum_{k=1}^{N_\tau } \left( \int_\Omega ((T_{k+1}^\tau(x)-x) \cdot \nabla u(k\tau,x) + \tilde R_\tau(x)  )    \rho_{k+1}^\tau \mathrm{d}x \right)\\
&= \sum_{k=1}^{N_\tau } \left( \int_\Omega  ( \tau( \nabla \dive(z^\tau_{k+1}))\cdot \nabla u(k\tau,x)  + \tilde R_\tau (x)  )\rho_{k+1}^\tau \mathrm{d}x \right),
\end{align*}
where $\vert \tilde R_\tau(x) \vert \leq C \Vert  D^2 u (k\tau, \cdot)\Vert_{L^\infty} \vert T_{k+1}^\tau(x) - x\vert^2$.
Note also that  for $t \in (k\tau, (k+1)\tau]$, $\vert \nabla u(k\tau, \cdot) - \nabla u(t,\cdot) \vert \leq \tau \Vert \partial_t \nabla u\Vert_{L^\infty}$.
Therefore,
\begin{equation}\label{approxtransp}
\int_0^T \int_\Omega (\partial_t  u \;  \rho^\tau-\rho^\tau  \nabla \dive(z^\tau)\cdot \nabla u) \mbox{d}x \mbox{d}t=-\int_\Omega u(0,x) \rho_1^\tau(x) \mbox{d} x+R_\tau(u)
\end{equation}
with
\begin{equation}
\vert R_\tau(u) \vert \le C \max\{\Vert D^2 u \Vert_{L^\infty}, \Vert  \partial_t \nabla u \Vert_{L^\infty}\} \sum_{k=0}^{N_\tau}  W_2^2(\rho_k^\tau, \rho_{k+1}^\tau) \le C \tau.
\end{equation}
Passing to the limit $\tau$ to $0$ in \pref{approxtransp} yields that $\rho$ is a weak solution to 
\[\partial_t \rho -\dive \Big(\rho \nabla \dive (z)\Big)=0, \;  \rho_{\vert_{t=0}}=\rho_0, \; \rho \nabla \dive (z) \cdot \nu=0  \mbox{ on }  (0,T)\times\partial \Omega.\]
It remains to prove that $J(\rho(t,.))=\int_\Omega \dive(z(t,x)) \rho(t,x) \mbox{d}x$,  for a.e. $t\in (0,T)$. The inequality $J(\rho(t,.))\ge \int_\Omega \dive(z(t,x)) \rho(t,x) \mbox{d}x$ is obvious since $z(t,.) \in H^1_{\dive}(\Omega)$, $z(t,.)\cdot \nu=0$ on $\partial \Omega$ and  $\Vert z(t,.)\Vert_{L^{\infty}}\le 1$. To prove the converse inequality, we use Fatou's Lemma, the lower semi-continuity of $J$, \pref{eulerk1} and the weak-convergence in $L^1((0,T)\times \Omega)$ of $\rho^\tau  \dive( z^\tau)$ to $\rho  \dive( z)$:
\[\begin{split}
\int_0^T J(\rho(t,.)) \mbox{d} t & \le \int_0^T  \liminf_{\tau} J(\rho^\tau(t,.)) \mbox{d} t \\
&\le     \liminf_{\tau} \int_0^1  J(\rho^\tau(t,.)) \mbox{d} t   \\
&=  \liminf_{\tau} \int_0^T \int_\Omega \dive( z^\tau(t,x))  \rho^\tau(t,x) \mbox{d} x \mbox{d} t\\
&= \int_0^T \int_\Omega  \dive(z(t,x)) \rho(t,x) \mbox{d} x \mbox{d} t
\end{split}\]
which concludes the proof.

\end{proof}

\smallskip

{\textbf{Acknowledgements:}} The authors wish to thank Vincent Duval and Gabriel Peyr\'e for suggesting the TV-Wasserstein problem to them as well as for fruitful discussions. They also thank Maxime Laborde and Filippo Santambrogio for helpful remarks in particular regarding the maximum principle.

\bibliographystyle{plain}

\bibliography{mybiblio}

\end{document}